\numberwithin{equation}{section}
\newcommand{\R}{\mathbb{R}}
\newcommand{\Z}{\mathbb{Z}}
\newcommand{\N}{\mathbb{N}}
\renewcommand{\H}{\mathbb{H}}
\newcommand{\bdot}{\bm{\cdot}}
\newcommand{\ud}{\mathrm{d}}
\newcommand{\bg}{\mathbf{g}}
\newcommand{\fg}{\mathfrak{g}}
\def\inn#1#2{\langle#1,#2\rangle}
\theoremstyle{plain}
\newtheorem{theorem}{Theorem}[section]
\newtheorem{lemma}[theorem]{Lemma}
\newtheorem{proposition}[theorem]{Proposition}
\theoremstyle{definition}
\newtheorem{definition}[theorem]{Definition}
\begin{document}

\title[Lacunary maximal functions]{Lacunary maximal functions on homogeneous groups}

\begin{abstract}
    We observe that classical arguments of Ricci--Stein can be used to prove $L^p$ bounds for maximal functions associated to lacunary dilates of a fixed measure in the setting of homogenous groups. This recovers some recent results on averages over Kor\'anyi spheres and horizontal spherical averages of a type introduced by Nevo--Thangavelu. Moreover, the main theorem applies much more broadly and we explore its consequences through a variety of explicit examples. 
\end{abstract}

\author[A. Govindan Sheri]{Aswin Govindan Sheri}
\address{School of Mathematics, James Clerk Maxwell Building, The King's Buildings, Peter Guthrie Tait Road, Edinburgh, EH9 3FD, UK.}
\email{a.govindan-sheri@sms.ed.ac.uk}

\author[J. Hickman]{Jonathan Hickman}
\address{School of Mathematics, James Clerk Maxwell Building, The King's Buildings, Peter Guthrie Tait Road, Edinburgh, EH9 3FD, UK.}
\email{jonathan.hickman@ed.ac.uk}

\author[J. Wright]{James Wright}
\address{School of Mathematics, James Clerk Maxwell Building, The King's Buildings, Peter Guthrie Tait Road, Edinburgh, EH9 3FD, UK.}
\email{j.r.wright@ed.ac.uk}

\date{}

\maketitle

%%%%%%%%%%%%%%%%%%%%%%%%%%%%%%%%%%%%%%%%%%%%%%%%%%%%%%%%%%%%%%%%%%%%%%%%%%%%%%%%%%%%%%%%%%%%%%%%

%    Introduction

%%%%%%%%%%%%%%%%%%%%%%%%%%%%%%%%%%%%%%%%%%%%%%%%%%%%%%%%%%%%%%%%%%%%%%%%%%%%%%%%%%%%%%%%%%%%%%%%

\section{Main result and examples}

%%%%%%%%%%%%%%%%%%%%%%%%%%%%%%%%%%%%%%%%%%%%%%%%%%%%%%%%%%%%%%%%%%%%%%%%%%%%%%%%%%%%%%%%%%%%%%%%

%    Introduction

%%%%%%%%%%%%%%%%%%%%%%%%%%%%%%%%%%%%%%%%%%%%%%%%%%%%%%%%%%%%%%%%%%%%%%%%%%%%%%%%%%%%%%%%%%%%%%%%

\subsection{Introduction}\label{subsec: intro} Recently, there has been some interest in the $L^p$ mapping properties of geometric maximal operators over homogeneous groups $G$. Here we consider operators $f \mapsto \sup_k |f \ast \sigma_k|$ acting on functions $f \in C_c(G)$, where $\sigma$ is a measure supported on some submanifold $S$ of $G$, the $\sigma_k$ are (automorphic) lacunary dilates of $\sigma$ and the convolution is taken with respect to the group operation. In the case of the Heisenberg groups $\H^m$, prototypical examples arise by taking $S$ (in exponential coordinates) to be:
\begin{enumerate}[1)]
    \item The Kor\'anyi sphere $S := \{(x,u) \in \H^m : |x|^4 + |u|^2 = 1\}$: see \cite{GT2021, Srivastava2022};
    \item The horizontal (euclidean) sphere $S := S^{2m-1} \times \{0\}$: see \cite{BHRT2021, RSS2022}.
\end{enumerate}
We shall discuss these examples in more detail in \S\ref{subsec: examples} below.\footnote{In many recent works, including those cited above, the authors were also (and often primarily) interested in proving sparse bounds for the associated maximal operators. Here we are only interested in the $L^p$ theory.}

Here we observe that classical arguments of Ricci--Stein~\cite{RS1988} lead to a very general and robust $L^p$ theory for such lacunary maximal functions. These methods were developed to study boundedness properties of classes of singular integrals and also maximal operators along homogeneous submanifolds (see \cite[Lemma 4.2]{RS1988}); it is therefore unsurprising that they are useful in the present context. Nevertheless, given the renewed interest in these problems, it appears timely to revisit the approach of~\cite{RS1988} and give a clear presentation of its implications.

%%%%%%%%%%%%%%%%%%%%%%%%%%%%%%%%%%%%%%%%%%%%%%%%%%%%%%%%%%%%%%%%%%%%%%%%%%%%%%%%%%%%%%%%%%%%%%%%

%    Setup and main results

%%%%%%%%%%%%%%%%%%%%%%%%%%%%%%%%%%%%%%%%%%%%%%%%%%%%%%%%%%%%%%%%%%%%%%%%%%%%%%%%%%%%%%%%%%%%%%%%

\subsection{Setup and main results} Let $(G,\,\bdot\,)$ be a homogenous group with family of automorphic dilations $(\delta_t)_{t > 0}$; we recall the relevant definitions in \S\ref{subsec: basic concepts} below. Given a finite, compactly supported Borel measure $\sigma$ on $G$ and $k \in \Z$, define the $2^k$-dilate $\sigma_k$ of $\sigma$ and the reflection $\tilde{\sigma}$ of $\sigma$ by
\begin{equation*}
    \langle \sigma_k,\phi \rangle := \int_{G} \phi \circ \delta_{2^k} (x)\,\ud\sigma(x) \quad \textrm{and} \quad\inn{\tilde{\sigma}}{\phi} := \int_{G} \phi(x^{-1})\,\ud\sigma(x)  \qquad \textrm{for $\phi \in C_c(G)$.}
\end{equation*}
For any such measure $\sigma$ we consider the associated averaging operator  
\begin{equation*}
    A[\sigma]f(x) := f \ast \sigma(x) = \int_{G} f(x \bdot y^{-1}) \,\ud\sigma(y)  \qquad \textrm{ for $x \in G$,}
\end{equation*}
and lacunary maximal function
\begin{equation*}
    M[\sigma]f(x) := \sup_{ k\in \Z}|A[\sigma_k]f(x)| \qquad \textrm{for $x \in G$,} 
\end{equation*}
defined initially for $f \in C_c(G)$. We are interested in establishing conditions on $\sigma$ which ensure $M[\sigma]$ is bounded on $L^p(G)$ for all $1 < p \leq \infty$. 

Following Ricci--Stein~\cite{RS1988} (and also \cite{Christ1985}), we consider interated convolution products of $\sigma$. For $N \in \N_0$, we define the $N$th convolution product $\sigma^{(N)}$ recursively: starting with $\sigma^{(0)} = \sigma$, we take 
\begin{equation}\label{eq: conv prod}
    \sigma^{(N)} := \begin{cases} \sigma^{(N-1)} \ast \tilde{\sigma} & \textrm{if  $N$ is odd},\\
      \sigma^{(N - 1)} \ast \sigma & \text{if $N \geq 2$ is even.} \end{cases}
\end{equation}
Our main hypothesis is framed in terms of the regularity properties of the $\sigma^{(N)}$ for large $N$.

\begin{definition}[Curvature assumption]\label{def: CA}
Let $\sigma$ be a finite Borel measure on $G$. We say that $\sigma$ satisfies $(\mathrm{CA})$ if the following hold:
\begin{enumerate}[i)]
    \item There exists $N > 0$ such that $\sigma^{(N)}$ is absolutely continuous with respect to the Haar measure on $G$;
    \item If $h$ denotes the Radon--Nikodym derivative of $\sigma^{(N)}$ with respect to the Haar measure, then there exists $\gamma > 0$ and $C_{\sigma} > 0$ such that
\begin{equation*}
    \int_{G}|h(x\bdot y^{-1}) - h(x)| \,\ud x + \int_{G}|h(y^{-1}\bdot x) - h(x)|\,\ud x \leq C_{\sigma} |y|_G^{\gamma} \quad \text{for $y \in G$.}
\end{equation*}
\end{enumerate}
Here $|\,\cdot\,|_G$ denotes a choice of homogeneous norm on $G$; see \S\ref{subsec: basic concepts} for the definition.\footnote{The use of homogeneous norm is for convenience; here one could equally replace $|y|_G$ with the usual euclidean norm $|y|$.}
\end{definition}

With the above definition, our main theorem reads as follows.

\begin{theorem}\label{thm: main}
Let $G$ be a homogeneous group and $\sigma$ be a finite, compactly supported Borel measure on $G$. If $\sigma$ satisfies $(\mathrm{CA})$, then $M[\sigma]$ is bounded on $L^p(G)$ for all $1 < p \leq \infty$.
\end{theorem}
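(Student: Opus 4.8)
The plan is to follow the classical template for lacunary maximal functions: compare the rough averages with smooth ones, dominate the smooth part by the Hardy--Littlewood maximal operator, and control the remaining rough part by a square function whose $L^p$ boundedness is extracted from $(\mathrm{CA})$. First I would dispose of the endpoint $p=\infty$, which is immediate: since the dilations preserve total mass, $\|A[\sigma_k]f\|_\infty \le \sigma(G)\|f\|_\infty$ uniformly in $k$, so $M[\sigma]$ is bounded on $L^\infty$, and it suffices to treat $1<p<\infty$. Fix $\psi\in C_c^\infty(G)$ with $\int_G\psi = \sigma(G)$ and set $\nu := \sigma - \psi$, a compactly supported, finite, \emph{mean-zero} measure, with dilates $\nu_k$, $\psi_k$ defined as for $\sigma$. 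Splitting $A[\sigma_k] = A[\psi_k] + A[\nu_k]$ gives
\[
M[\sigma]f \;\le\; \sup_{k\in\Z}|A[\psi_k]f| \;+\; \Big(\sum_{k\in\Z}|A[\nu_k]f|^2\Big)^{1/2} \;=:\; M_\psi f + Sf .
\]
Since $\psi$ is dominated by an integrable, radially decreasing (in the homogeneous norm) function, $M_\psi f \lesssim \mathcal{M}f$, where $\mathcal M$ is the Hardy--Littlewood maximal operator over homogeneous balls, bounded on $L^p(G)$ for $1<p\le\infty$. The problem is thus reduced to $\|Sf\|_p\lesssim\|f\|_p$ for $1<p<\infty$.

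The core of the argument is the $L^2$ bound for $S$, and this is where $(\mathrm{CA})$ enters. Writing $A[\nu_j]^* A[\nu_k]=A[\nu_k\ast\tilde\nu_j]$, dilation-invariance of the operator norm reduces matters to a decay estimate $\|A[\nu_m\ast\tilde\nu_0]\|_{L^2\to L^2}\lesssim 2^{-\epsilon|m|}$ for some $\epsilon>0$ (with the companion bound for $A[\nu_j]A[\nu_k]^*$ being identical). The mechanism is the following consequence of $(\mathrm{CA})$: if $\beta=g\,\ud x$ is absolutely continuous with $g$ satisfying the H\"older-$L^1$ bound of Definition~\ref{def: CA}, and $\lambda$ is a mean-zero measure concentrated at the finer scale $2^{-m}$, then $\int \ud\lambda=0$ together with that bound yields $\|\beta\ast\lambda\|_{L^1},\,\|\lambda\ast\beta\|_{L^1}\lesssim 2^{-m\gamma}$, hence operator-norm decay. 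The terms of $\nu_m\ast\tilde\nu_0$ involving the smooth factor $\psi$ are handled directly in this way; for the purely singular term coming from $\sigma\ast\tilde\sigma$ there is no smoothness a priori, and here I would raise the operator to a high power and expand, so that after sufficiently many convolutions the $\sigma$-factors coalesce into $\sigma^{(N)}$, which by $(\mathrm{CA})$ is exactly of the form $\beta=g\,\ud x$ above. Feeding this back produces the cross-scale decay, and a Cotlar--Stein argument (equivalently, Rademacher averaging and Khintchine's inequality applied to $\sum_k\pm A[\nu_k]$) then gives $\|Sf\|_2\lesssim\|f\|_2$.

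I expect this last step --- converting the qualitative regularity of $\sigma^{(N)}$ in $(\mathrm{CA})$ into \emph{quantitative cross-scale operator decay} through the multilinear convolution expansion --- to be the main obstacle, since the bookkeeping of which scales contribute, and the need to accumulate $N$ factors of $\sigma$ before any smoothness becomes available, must be organised carefully. The passage to general $p$ should then be comparatively routine: viewing $f\mapsto(A[\nu_k]f)_k$ as a convolution operator with $\ell^2$-valued kernel, the $L^2(\ell^2)$ bound just established combines with the vector-valued Calder\'on--Zygmund theory on $G$ (as a space of homogeneous type) to give $L^p(\ell^2)$ boundedness, and hence $\|Sf\|_p\lesssim\|f\|_p$, for all $1<p<\infty$; the requisite H\"ormander-type regularity of the kernel is again read off from $(\mathrm{CA})$ via the same smoothing-by-convolution-powers device. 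Together with the bounds for $M_\psi$ and the trivial $L^\infty$ estimate, this yields Theorem~\ref{thm: main}.
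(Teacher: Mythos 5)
Your reduction to the mean-zero measure $\nu=\sigma-\psi$, the treatment of the smooth part by the Hardy--Littlewood maximal operator, and the $L^2$ theory are all sound and essentially coincide with the paper's argument: the cross-scale decay $\|A[\nu_m\ast\tilde\nu_0]\|_{L^2\to L^2}\lesssim 2^{-\epsilon|m|}$ is obtained there by exactly the iterated $T^*T$ expansion you describe, with the convolution powers coalescing into $\sigma^{(N)}$ so that the $L^1$-H\"older condition in $(\mathrm{CA})$ can be applied against a mean-zero factor at the finer scale. The problem is the passage to $p\neq 2$.

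The gap is in the vector-valued Calder\'on--Zygmund step. The kernel of $f\mapsto (A[\nu_k]f)_k$ is the sequence of measures $(\tilde\nu_k)_k$, and $\nu=\sigma-\psi$ is in general a genuinely singular measure (e.g.\ surface measure on a sphere minus a bump). The $\ell^2$-valued H\"ormander condition
\begin{equation*}
\sup_{y}\int_{|x|_G\ge C|y|_G}\Big(\sum_k |\nu_k(y^{-1}\bdot x)-\nu_k(x)|^2\Big)^{1/2}\ud x\lesssim 1
\end{equation*}
is not even meaningful for such a kernel, and if interpreted in total-variation terms it fails: translating a singular measure by a small $y$ produces a difference of total variation $\approx 2\|\nu\|$ at every scale $k$ with $2^k\gtrsim |y|_G$, with no decay to make the sum over $k$ finite. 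Crucially, the ``smoothing-by-convolution-powers device'' cannot be invoked here: the identity $\|T\|=\|T^*T\|^{1/2}$ lets you replace $\nu$ by $\nu^{(N)}$ only when estimating $L^2$ \emph{operator norms}; there is no analogous manoeuvre that transfers the regularity of $\sigma^{(N)}$ into pointwise or $L^1$ regularity of the kernel $\nu_k$ itself, which is what the weak $(1,1)$ theory requires. The missing ingredient is a Littlewood--Paley decomposition of the measure: writing $\nu_k=\sum_{\ell}\psi_{k+\ell}\ast\nu_k$ for a mean-zero $\psi\in C_c^\infty(G)$ adapted to the group dilations, each piece $T_\ell^k f=f\ast\psi_{k+\ell}\ast\nu_k$ has a smooth kernel whose H\"ormander constant one controls by $O(|\ell|)$ (or $2^{\varepsilon|\ell|}$) via a mean value theorem applied to $\psi_{k+\ell}$, while the $T^*T$ argument gives an $L^2$ operator norm $O(2^{-\rho|\ell|})$ for the randomised sum $\sum_k r_kT_\ell^k$; interpolating and summing in $\ell$ then yields the $L^p$ bound for $1<p\le 2$, and the remaining range follows by interpolation with the trivial $L^\infty$ estimate. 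Without this decomposition (or an equivalent mollification of $\nu$ summed over scales), the Calder\'on--Zygmund step of your proposal does not go through.
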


Theorem~\ref{thm: main} recovers many results in the literature. In the euclidean case it is equivalent to a classical result of Duoandikoetxea and Rubio de Francia \cite{DRdF1986} concerning maximal estimates under Fourier decay hypotheses on $\sigma$. For Heisenberg groups, it implies the $L^p$ boundedness of the spherical averages (over both Kor\'anyi and horizontal spheres) discussed in \S\ref{subsec: intro}. It also implies \cite[Lemma 4.2]{RS1988}, which concerns maximal functions along homogeneous submanifolds of $G$. However, the setup described above is very robust and there is a wealth of additional examples, some of which we describe in the following subsection.

As mentioned previously, the proof of Theorem~\ref{thm: main} is heavily based on the methods of \cite{RS1988}, which in turn were partially inspired by \cite{Christ1985}. Key ingredients are iterated $T^*T$ arguments, which allow us to access and exploit the condition $(\mathrm{CA})$, and Calder\'{o}n--Zygmund theory adapted to the homogeneous group setting.

%%%%%%%%%%%%%%%%%%%%%%%%%%%%%%%%%%%%%%%%%%%%%%%%%%%%%%%%%%%%%%%%%%%%%%%%%%%%%%%%%%%%%%%%%%%%%%%%

%    Examples

%%%%%%%%%%%%%%%%%%%%%%%%%%%%%%%%%%%%%%%%%%%%%%%%%%%%%%%%%%%%%%%%%%%%%%%%%%%%%%%%%%%%%%%%%%%%%%%%

\subsection{Examples}\label{subsec: examples} The curvature assumption from Definition~\ref{def: CA} applies to a broad class of measures and, consequently, Theorem~\ref{thm: main} unifies and dramatically extends a number of results in the literature. Here we briefly describe some representative examples of such measures; we return to discuss these and other examples in more detail in \S\ref{sec: analysis examples}.

%%%%%%%%%%%%%%%%%%%%%%%%%%%%%%%%%%%%%%%%%%%%%%%%%%%%%%%%%%%%%%%%%%%%%%%%%%%%%%%%%%%%%%%%%%%%%%%%

%    The Euclidean case

%%%%%%%%%%%%%%%%%%%%%%%%%%%%%%%%%%%%%%%%%%%%%%%%%%%%%%%%%%%%%%%%%%%%%%%%%%%%%%%%%%%%%%%%%%%%%%%%

\subsubsection*{1) The Euclidean case} Suppose $\sigma$ is a finite, compactly supported Borel measure on $\R^n$. It is not difficult to show $(\mathrm{CA})$ holds in this case if and only if there exists some $\kappa > 0$ and a constant $C_{\sigma} \geq 1$ such that
\begin{equation}\label{eq: Fourier decay}
    |\widehat{\sigma}(\xi)| \leq C_{\sigma} (1 + |\xi|)^{-\kappa} \quad \textrm{for all $\xi \in \widehat{\R}^n$,} \quad \textrm{where} \quad \widehat{\sigma}(\xi) := \int_{\R^n} e^{-2 \pi i x \cdot \xi} \,\ud \sigma(x).
\end{equation}
Thus, Theorem~\ref{thm: main} recovers a classical result of \cite{DRdF1986}. Prototypical examples of measures satisfying \eqref{eq: Fourier decay} are given by smooth densities supported on finite-type submanifolds of $\R^n$: see, for instance, \cite[Chapter VIII, \S3.2]{Stein_book}. There are also many interesting `fractal' measures. The literature is too vast to survey here, but we mention some recent representative examples: measures arising from the theory of diophantine approximation \cite{FH2023}, realisations of random processes \cite{FS2018} and also various non-constructive examples exhibiting interesting dimensional properties \cite{Korner2011}.

%%%%%%%%%%%%%%%%%%%%%%%%%%%%%%%%%%%%%%%%%%%%%%%%%%%%%%%%%%%%%%%%%%%%%%%%%%%%%%%%%%%%%%%%%%%%%%%%

%    The Kor\'anyi sphere and its generalisations

%%%%%%%%%%%%%%%%%%%%%%%%%%%%%%%%%%%%%%%%%%%%%%%%%%%%%%%%%%%%%%%%%%%%%%%%%%%%%%%%%%%%%%%%%%%%%%%%

\subsubsection*{2) The Kor\'anyi sphere and extensions} For $m \geq 1$ let $J$ denote the symplectic form on $\R^{2m}$ given by $x^{\top}Jy = \tfrac{1}{2}\sum_{j=1}^m (x_jy_{m+j} - x_{m+j}y_j)$ for $x$, $y \in \R^{2m}$. Consider the Heisenberg group $\H^m$, which we identify with $\R^{2m+1}$ endowed with the non-commutative group operation $(x,u) \bdot (y, v) := (x+y, u + v + x^{\top}Jy)$. The \textit{Kor\'anyi sphere} is then defined to be the set
\begin{equation*}
    S := \{ (x,u) \in \R^{2m} \times \R : |(x,u)|_{\H^m} = 1\} \quad \textrm{where} \quad |(x,u)|_{\H^m} := \big(|x|^4 + |u|^2 \big)^{1/4};
\end{equation*}
that is, $S$ is the unit sphere with respect to the \textit{Kor\'anyi norm} $|\,\cdot\,|_{\H^m}$. We take $\sigma$ to be the normalised surface measure on $S$ induced by the Lebesgue measure on the ambient space $\R^{2m+1}$. 

It is not difficult to show $\sigma$ satisfies $(\mathrm{CA})$, and therefore Theorem~\ref{thm: main} implies the associated lacunary maximal operator $M[\sigma]$ is bounded for all $1 < p \leq \infty$. For $n \geq 2$, this recovers Theorem 1.2 of \cite{GT2021} (see also \cite{Srivastava2022}). However, here the setup can be generalised considerably. For instance, one may replace the Heisenberg group with any graded Lie group $G$ and the Kor\'anyi sphere $S$ with any member of a broad class of convex sets lying in $G$. More precisely, given any analytic submanifold $\Sigma$ of the Lie algebra $\fg$ formed by the boundary of an open, bounded convex set, we can consider averages over $S := \exp(\Sigma) \subset G$. We discuss the details of these extensions in \S\ref{subsec: Koranyi} below. 
%%%%%%%%%%%%%%%%%%%%%%%%%%%%%%%%%%%%%%%%%%%%%%%%%%%%%%%%%%%%%%%%%%%%%%%%%%%%%%%%%%%%%%%%%%%%%%%%

%    Horizontal spheres 

%%%%%%%%%%%%%%%%%%%%%%%%%%%%%%%%%%%%%%%%%%%%%%%%%%%%%%%%%%%%%%%%%%%%%%%%%%%%%%%%%%%%%%%%%%%%%%%%

\subsubsection*{3) Horizontal spheres and extensions} Returning to the Heisenberg group $\H^m$, now consider the normalised surface measure $\sigma$ on the (euclidean) sphere $S^{2m-1} \times \{0\} \subseteq \R^{2m+1}$. In this case the operators
\begin{equation}\label{eq: horizontal spherical av}
    A[\sigma_k] f(x,u) = \int_{S^{2m-1}} f\big(x-2^k y, u - 2^k x^{\top} J y\big) \,\ud \sigma(y), \qquad (x,u) \in \R^{2m+1},
\end{equation}
take averages over ellipsoids lying in the horizontal distribution on $\H^m$. Averages of this kind were first considered by Nevo--Thangavelu~\cite{NT1997} (in the context of the `full' maximal function defined with respect to a continuum of dilates) and have been extensively studied: see, for instance, \cite{MS2004, NT2004, ACPS2021, PS2021, Bentsen2022, BGHS2022, RSS2022, LL2023}.

As before $\sigma$ satisfies $(\mathrm{CA})$ and we obtain the boundedness of $M[\sigma]$. This recovers Theorem 1.1 of \cite{BHRT2021} (see also \cite{RSS2022}, where the $n = 1$ case and extensions to M\'etivier groups are considered) although, as remarked in \cite{RSS2022}, such bounds can be directly deduced from earlier work such as \cite{MS2004}. However, once again the setup can be generalised considerably. In this case it is natural to work in a stratified group $G$ (we recall the relevant definitions in \S\ref{subsec: graded and stratified groups}), so that the Lie algebra $\fg = \bigoplus_{j=1}^{\infty} V_j$ is graded and $V_1$ is a vector subspace of dimension $d$ which generates $\fg$. We consider the (euclidean) unit sphere $S^{d-1}$ in $V_1$, which we map into $G$ via the exponential map. We then let $\sigma$ be the normalised surface measure on this sphere in $G$. The curvature assumption $(\mathrm{CA})$ continues to hold at this level of generality. Moreover, we may further replace the sphere $S^{d-1}$ in $V_1$ with some other surface, possibly of lower dimension. We say that an analytic submanifold $\Sigma$ of $V_1$ is \textit{non-degenerate} if it is not contained in a proper affine subspace of $V_1$. With this definition, $(\mathrm{CA})$ continues to hold for any smooth, compactly supported density on $S := \exp(\Sigma)$ whenever $\Sigma \subset V_1$ is a non-degenerate analytic submanifold.

%%%%%%%%%%%%%%%%%%%%%%%%%%%%%%%%%%%%%%%%%%%%%%%%%%%%%%%%%%%%%%%%%%%%%%%%%%%%%%%%%%%%%%%%%%%%%%%%

%    Tilted spheres 

%%%%%%%%%%%%%%%%%%%%%%%%%%%%%%%%%%%%%%%%%%%%%%%%%%%%%%%%%%%%%%%%%%%%%%%%%%%%%%%%%%%%%%%%%%%%%%%%

\subsubsection*{4) Tilted spheres and extensions} An interesting variant of the previous example arises from `tilting' the sphere $S^{2m-1}$ in \eqref{eq: horizontal spherical av}. Let $v \in \R^{2m}$ and now consider the averaging operators
\begin{equation*}
    A[\sigma^v_k] f(x,u) = \int_{S^{2m-1}} f\big(x-2^k y, u - 2^{2k}\inn{v}{y} - 2^k x^{\top} J y\big) \,\ud \sigma(y), \qquad (x,u) \in \R^{2m+1},
\end{equation*}
which correspond to Heisenberg convolution with a dilates of a suitably `tilted' variant $\sigma^v$ of the measure $\sigma$ on $S^{2m-1} \times \{0\}$. Averages of this form, along with natural extensions to the class of M\'etivier groups, have been considered in a number of works~\cite{MS2004, ACPS2021, RSS2022}.

The tilted measures $\sigma^v$ also satisfy $(\mathrm{CA})$ and we again obtain the boundedness of $M[\sigma]$. The results also extend to general stratified groups and analytic submanifolds $\Sigma \subseteq V_1$ under the non-degeneracy hypothesis described in 3). However, we may go further. Indeed, we may replace the tilted sphere with any analytic submanifold $\Sigma \subset \fg$ with the property that the projection $\Pi_1(\Sigma)$ onto $V_1$ is not contained in any proper affine subspace of $V_1$. This includes the examples discussed here and in 3) above, but it also includes examples which are not associated to some $d$-plane distribution in $G$, such as averages over group translates of the moment curve $(t,t^2, \dots, t^n)$ in $G$. We discuss the details of these extensions in \S\ref{subsec: horizontal} below.

%%%%%%%%%%%%%%%%%%%%%%%%%%%%%%%%%%%%%%%%%%%%%%%%%%%%%%%%%%%%%%%%%%%%%%%%%%%%%%%%%%%%%%%%%%%%%%%%

%    Notational conventions

%%%%%%%%%%%%%%%%%%%%%%%%%%%%%%%%%%%%%%%%%%%%%%%%%%%%%%%%%%%%%%%%%%%%%%%%%%%%%%%%%%%%%%%%%%%%%%%%

\subsection*{Notational conventions} 

Given a list of objects $L$ and real numbers $A$, $B \geq 0$, we write $A \lesssim_L B$ or $B \gtrsim_L A$ to indicate $A \leq C_L B$ for some constant $C_L$ which depends only on items in the list $L$ and the underlying choice of group $G$.\footnote{We also suppress the dependence on various constructions associated to $G$ such as a choice of homogeneous norm, basis $\{X_j^R\}_{j=1}^n$ of right-invariant vector fields, and so on.} We write $A \sim_L B$ to indicate $A \lesssim_L B$ and $B \lesssim_L A$. 

%%%%%%%%%%%%%%%%%%%%%%%%%%%%%%%%%%%%%%%%%%%%%%%%%%%%%%%%%%%%%%%%%%%%%%%%%%%%%%%%%%%%%%%%%%%%%%%%

%    Acknowledgements

%%%%%%%%%%%%%%%%%%%%%%%%%%%%%%%%%%%%%%%%%%%%%%%%%%%%%%%%%%%%%%%%%%%%%%%%%%%%%%%%%%%%%%%%%%%%%%%%

\subsection*{Acknowledgements} The first and second authors thank David Beltran and Leonardo Tolomeo for a variety of helpful comments and suggestions regarding this project. The first author was supported by The Maxwell Institute Graduate School in Analysis and its Applications, a Centre for Doctoral Training funded by the UK Engineering and Physical Sciences Research Council (grant EP/L016508/01), the Scottish Funding Council, Heriot--Watt University, and the University of Edinburgh.

%%%%%%%%%%%%%%%%%%%%%%%%%%%%%%%%%%%%%%%%%%%%%%%%%%%%%%%%%%%%%%%%%%%%%%%%%%%%%%%%%%%%%%%%%%%%%%%%

%    Proof of Theorem~\ref{thm: main}

%%%%%%%%%%%%%%%%%%%%%%%%%%%%%%%%%%%%%%%%%%%%%%%%%%%%%%%%%%%%%%%%%%%%%%%%%%%%%%%%%%%%%%%%%%%%%%%%

\section{Proof of Theorem~\ref{thm: main}}

%%%%%%%%%%%%%%%%%%%%%%%%%%%%%%%%%%%%%%%%%%%%%%%%%%%%%%%%%%%%%%%%%%%%%%%%%%%%%%%%%%%%%%%%%%%%%%%%

%    Review of the basic concepts from the theory of homogeneous groups

%%%%%%%%%%%%%%%%%%%%%%%%%%%%%%%%%%%%%%%%%%%%%%%%%%%%%%%%%%%%%%%%%%%%%%%%%%%%%%%%%%%%%%%%%%%%%%%%

\subsection{Homogeneous groups}\label{subsec: basic concepts} We begin by reviewing some basic concepts from the theory of homogeneous groups relevant to the proof. Much of the material presented here can be found in Chapter 1 of the classical treatise \cite{FS_book}. 

Let $G$ be a connected, simply connected nilpotent Lie group with Lie algebra $\fg$; we shall always tacitly assume $\fg$ is real and finite-dimensional. Under these hypotheses, the exponential map $\exp \colon \fg \to G$ is a (globally defined) diffeomorphism. This allows us to identify $G$ with $\R^n$ endowed with a group operation $(x, y) \mapsto x \bdot y$. Furthermore, this group operation is a polynomial mapping and the usual Lebesgue measure on $\R^n$ is a left and right-invariant Haar measure for $G$. We refer to the dimension $n$ as the \textit{topological dimension} of $G$, and sometimes denote this by $\dim G$. 

If $\fg$ is a Lie algebra, then $(\delta_t)_{t > 0}$ is a \textit{family of dilations on $\fg$} if each $\delta_t \colon \fg \to \fg$ is an algebra automorphism\footnote{So that $\delta_t \colon \fg \to \fg$ is a linear map and $\delta_t[X,Y]_{\fg} = [\delta_t X, \delta_t Y]_{\fg}$ for all $X, Y \in \fg$, where $[\,\cdot\,,\,\cdot\,]_{\fg}$ denotes the Lie bracket on $\fg$.} and there exists some diagonalisable linear operator $\Lambda$ on $\fg$ with positive eigenvalues such that $\delta_t := \exp(\Lambda \log t)$ for all $t > 0$. In particular, $\delta_{st} = \delta_s \circ \delta_t$ for all $s$, $t >0$. In this case, $\Lambda$ is called the \textit{dilation matrix}.

A \textit{homogeneous group} is a connected, simply connected nilpotent Lie group $G$ such that the Lie algebra $\fg$ is endowed with a family of dilations $(\delta_t)_{t > 0}$. In this case, the maps $\exp \circ \,\delta_t\, \circ \exp^{-1} \colon G \to G$ are group automorphisms, which are referred to as \textit{dilations of $G$} and are also denoted by $\delta_t$. 

Henceforth, let $G$ be a homogenous group $G$. The \textit{homogeneous dimension of $G$} is the quantity
\begin{equation*}
    Q := \mathrm{trace}(\Lambda) = \sum_{j=1}^n \lambda_j,
\end{equation*}
where $\Lambda$ is the dilation matrix and the $\lambda_j > 0$ are the eigenvalues of $\Lambda$ listed with multiplicity. The Haar measure on $G$ then satisfies the important scaling property
\begin{equation*}
    t^{-Q} \int_G f \circ \delta_{t^{-1}}(x)\,\ud x = \int_G f(x)\,\ud x \qquad \textrm{for all $f \in L^1(G)$ and $t > 0$.}
\end{equation*}

A \textit{homogeneous norm} on $G$ is a continuous map $|\,\cdot\,|_G \colon G \to [0, \infty)$ which is $C^{\infty}$ on $G \setminus \{0\}$ and satisfies:
\begin{enumerate}[a)]
    \item $|x^{-1}|_G = |x|_G$ and $|\delta_tx|_G = t|x|_G$ for all $x \in G$ and $t > 0$;
    \item $|x|_G = 0$ if and only if $x = 0$.
\end{enumerate}
Henceforth we shall assume $|\,\cdot\,|_G$ is some fixed choice of homogeneous norm on $G$; it is easy to see that there always exists at least one such norm. Furthermore, one can show that there exists a constant $C_G \geq 1$ such that 
\begin{equation}\label{eq: quasi triangle}
    |x\bdot y|_G \leq C_G\big(|x|_G + |y|_G\big) \qquad \textrm{for all $x$, $y \in G$.}
\end{equation}

Finally, we fix $\{X_1, \dots, X_n\} \subset \fg$ an orthogonal basis of eigenvectors for the dilation matrix $\Lambda$, so that $X_j$ has eigenvalue $\lambda_j$ and $0 < \lambda_1 \leq \dots \leq \lambda_n$. Thus, we have $\delta_t X_j = t^{\lambda_j} X_j$ for $1 \leq j \leq n$. Furthermore, each $X_j$ corresponds to a right-invariant vector field $X_j^R$ on $G$ which, in particular, satisfies
\begin{equation}\label{eq: right-invariant}
    \frac{\partial}{\partial t} g(\exp(tX_j) \bdot x) = (X_j^Rg)(\exp(tX_j) \bdot x)
\end{equation}
for all $g \in C^1(G)$ and $x \in G$, $t \in \R$. Finally, the map
\begin{equation*}
    \Upsilon \colon \R^n \to G, \qquad \Upsilon \colon (t_1, \dots, t_n) \mapsto \exp(t_nX_n) \bdot \dots \bdot \exp(t_1X_1)
\end{equation*}
is a global diffeomorphism (for a proof of this fact, see \cite[Lemma 1.31]{FS_book}). 

\subsection{Littlewood--Paley theory} Henceforth, we fix a homogeneous group $G$ with family of dilations $(\delta_t)_{t > 0}$. We shall use a variant of the classical Littlewood--Paley decomposition adapted to this setting. Consider a function $\psi \in C_c^{\infty}(G)$ which is \textit{mean zero} in the sense that
\begin{equation*}
    \int_G \psi = 0
\end{equation*}
and form the dilates
\begin{equation*}
    \psi_k(x) := 2^{-kQ}\psi \circ \delta_{2^{-k}}(x);
\end{equation*}
here $Q$ is the homogeneous dimension of $G$, as defined in \S\ref{subsec: basic concepts}. Consider the convolution operators $f \mapsto f \ast \psi_k$ for $f \in L^1(G)$. For a suitable choice of $\psi$, these operators play the role of classical Littlewood--Paley frequency projections.

\begin{proposition}\label{prop: Littlewood--Paley}
Let $G$ be a homogeneous group. There exists $\psi \in C_c^{\infty}(G)$ of mean zero such that
\begin{equation}\label{eq: fourier proj 2}
    f = \sum_{k \in \Z} f \ast \psi_k \qquad \textrm{for all $f \in  C_c(G)$,}
\end{equation}
where the convergence holds in the $L^p$ sense for all $1 < p \leq \infty$.
\end{proposition}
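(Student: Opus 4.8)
The plan is to build $\psi$ from a single approximate identity by a telescoping (Calder\'on-type) construction, so that the series collapses to a difference of two convolutions whose limits are easy to identify. First I would fix a nonnegative $\phi \in C_c^{\infty}(G)$ with $\int_G \phi = 1$ and form its dilates $\phi_k(x) := 2^{-kQ}\phi\circ\delta_{2^{-k}}(x)$. Setting $\psi := \phi - \phi_1$ produces a function in $C_c^{\infty}(G)$, and the scaling property of Haar measure gives $\int_G \phi_1 = \int_G \phi = 1$, so $\psi$ has mean zero as required. The key algebraic point, verified directly from $\delta_s \circ \delta_t = \delta_{st}$ and the definition of the dilates, is that the $2^k$-dilate of this particular $\psi$ is
\[
    \psi_k = \phi_k - \phi_{k+1}.
\]

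Granting this identity, the partial sums telescope:
\[
    \sum_{k=-M}^{N} f \ast \psi_k = f \ast \phi_{-M} - f \ast \phi_{N+1},
\]
so the proposition reduces to showing, for every $f \in C_c(G)$ and every $1 < p \leq \infty$, that $f \ast \phi_{-M} \to f$ and $f \ast \phi_{N+1} \to 0$ in $L^p(G)$ as $M, N \to \infty$.

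The first limit is the statement that $(\phi_{-M})_M$ is an approximate identity: the $\phi_{-M}$ are $L^1$-normalised and concentrate at the origin as $M \to \infty$, so $f \ast \phi_{-M} \to f$ in $L^p$ for $1 \leq p < \infty$ by the continuity of translation on $L^p$, and uniformly (hence in $L^{\infty}$) using that $f \in C_c(G)$ is uniformly continuous. For the second limit I would invoke Young's inequality, valid since $G$ is unimodular, to write $\|f \ast \phi_{N+1}\|_p \leq \|f\|_1 \, \|\phi_{N+1}\|_p$; a direct computation with the Haar scaling gives $\|\phi_N\|_p = 2^{-NQ/p'}\|\phi\|_p$, where $p'$ is the conjugate exponent, and this tends to $0$ as $N \to \infty$ precisely because $p > 1$ (for $p = \infty$ one instead uses $\|\phi_N\|_{\infty} = 2^{-NQ}\|\phi\|_{\infty} \to 0$ together with $\|f\|_1 < \infty$).

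The construction and the telescoping are entirely routine; the only genuine points to watch are treating the full range $1 < p \leq \infty$ uniformly---in particular the endpoint $p = \infty$, where the approximate-identity convergence relies on the uniform continuity of $f$---and observing that the restriction $p > 1$ is forced by the large-scale term, since $\|\phi_N\|_1 = \|\phi\|_1$ does not decay and the series genuinely fails to converge to $f$ in $L^1$.
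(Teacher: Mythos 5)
Your proof is correct and follows essentially the same route as the paper: both arguments reduce the identity to the pair of limits $f \ast \phi_t \to f$ as $t \to 0$ and $f \ast \phi_t \to 0$ in $L^p$ for $p > 1$ as $t \to \infty$, for an approximate identity $\phi$. The only difference is cosmetic---you take the discrete telescoping difference $\psi = \phi - \phi_1$, while the paper obtains $\psi$ by the fundamental theorem of calculus in the continuous dilation parameter (i.e.\ $\psi = \int_1^2 h_t \,\frac{\ud t}{t}$); your version is, if anything, slightly more elementary.
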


This result is well-known (for instance, it can be deduced from \cite[Proposition 3.4]{Christ_book}); however, for completeness, we present the straightforward proof in Appendix~\ref{appendix: Littlewood--Paley}.

%%%%%%%%%%%%%%%%%%%%%%%%%%%%%%%%%%%%%%%%%%%%%%%%%%%%%%%%%%%%%%%%%%%%%%%%%%%%%%%%%%%%%%%%%%%%%%%%

%    Standard reductions

%%%%%%%%%%%%%%%%%%%%%%%%%%%%%%%%%%%%%%%%%%%%%%%%%%%%%%%%%%%%%%%%%%%%%%%%%%%%%%%%%%%%%%%%%%%%%%%%

\subsection{Standard reductions} 

We apply a sequence of standard reductions to reduce the study of the maximal operator to proving certain $L^2$ and weak-type $(1,1)$ bounds for a family \textit{linear} operators.

%%%%%%%%%%%%%%%%%%%%%%%%%%%%%%%%%%%%%%%%%%%%%%%%%%%%%%%%%%%%%%%%%%%%%%%%%%%%%%%%%%%%%%%%%%%%%%%%

%    Introduce cancellation

%%%%%%%%%%%%%%%%%%%%%%%%%%%%%%%%%%%%%%%%%%%%%%%%%%%%%%%%%%%%%%%%%%%%%%%%%%%%%%%%%%%%%%%%%%%%%%%%

\subsubsection*{Cancellation} It is useful to introduce some additional cancellation in our operator. Let $\phi \in C_c^{\infty}(G)$ be a non-negative function such that
\begin{equation*}
    \int_{G} \phi = \sigma(G).
\end{equation*}
Note that $\nu := \sigma - \phi$ is a compactly supported, signed Borel measure on $G$ such that $\|\nu\| < \infty$ and $\nu$ is \textit{mean zero} in the sense that
\begin{equation*}
    \nu(G) = 0.
\end{equation*}
Clearly, we have a pointwise inequality
\begin{equation*}
    M[\sigma]f(x) \leq M[\nu]f(x) + M[\phi]f(x).
\end{equation*}
Moreover, $M[\phi]$ is a variant of the Hardy--Littlewood maximal function on $G$ which, as is well-known (see, for instance, \cite[Corollary 2.5]{FS_book}), is bounded on $L^p(G)$ for all $1 < p \leq \infty$. It therefore suffices to estimate the maximal function $M[\nu]$.

%%%%%%%%%%%%%%%%%%%%%%%%%%%%%%%%%%%%%%%%%%%%%%%%%%%%%%%%%%%%%%%%%%%%%%%%%%%%%%%%%%%%%%%%%%%%%%%%

%   Domination by a square function

%%%%%%%%%%%%%%%%%%%%%%%%%%%%%%%%%%%%%%%%%%%%%%%%%%%%%%%%%%%%%%%%%%%%%%%%%%%%%%%%%%%%%%%%%%%%%%%%

\subsubsection*{Domination by a square function} Using Proposition~\ref{prop: Littlewood--Paley}, we can choose a mean zero function $\psi \in C^{\infty}_c(G)$ which satisfies \eqref{eq: fourier proj 2}. Consider the family of maximal functions
\begin{equation*}
  M_{\ell}[\nu] f  :=  M[\psi_{\ell} \ast \nu]f = \sup_{k \in \Z} |f \ast \psi_{k+\ell} \ast \nu_k | \qquad \textrm{for $f \in C_c(G)$,}
\end{equation*}
defined for all $\ell \in \Z$, so that we may pointwise dominate
\begin{equation}\label{eq: sf domination}
 M[\nu] f(x) \leq \sum_{\ell \in \Z} M_{\ell}[\nu] f(x)  \leq \sum_{\ell \in \Z} S_{\ell}[\nu] f (x)
\end{equation}
where each $S_{\ell}[\nu]f$ is the associated square function
\begin{equation*}
    S_{\ell}[\nu]f := \Big(\sum_{k \in \Z} |f \ast \psi_{k+\ell} \ast \nu_k |^2 \Big)^{1/2}.
\end{equation*}
Theorem~\ref{thm: main} is therefore a consequence of the following proposition.

\begin{proposition}\label{prop: square function} For all $1 < p \leq 2$, there exists some $\varepsilon(p) >0$ such that 
\begin{equation*}
    \|S_{\ell}[\nu]f\|_{L^p(G)} \lesssim_{\sigma, p} 2^{-\varepsilon(p)|\ell|}\|f\|_{ L^p(G)} 
\end{equation*}
holds for all $f \in C_c(G)$ and all $\ell \in \Z$.
\end{proposition}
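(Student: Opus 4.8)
The plan is to deduce Proposition~\ref{prop: square function} by interpolating between an $L^2$ estimate that carries \emph{geometric} decay in $|\ell|$ and an $L^{p_0}$ estimate (for some fixed $p_0 \in (1,2)$) that is allowed to grow polynomially in $|\ell|$; the polynomial factor is absorbed by the geometric one, yielding the asserted decay for every $p \in (1,2]$. The engine behind the $L^2$ decay is a \emph{single-scale} operator-norm bound: writing $T_\mu f := f \ast \mu$, I will show that for a suitable $c = c(\sigma) > 0$ one has $\|T_{\psi_\ell \ast \nu}\|_{L^2 \to L^2} \lesssim_{\sigma} 2^{-c|\ell|}$ for all $\ell \in \Z$. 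By the scaling property of the Haar measure this estimate is invariant under the dilations $\delta_{2^k}$, so it holds with $\psi_\ell \ast \nu$ replaced by $\psi_{k+\ell}\ast\nu_k$, uniformly in $k \in \Z$.

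For $\ell \geq 0$ the single-scale bound is elementary and uses only that $\nu$ is mean zero and compactly supported: since $\nu(G) = 0$, I write $\psi_\ell \ast \nu(x) = \int_G [\psi_\ell(y)\,(\,\cdot\,)]$ in the form $\int_G \psi_\ell(y)[\text{(difference of }x\text{-translates)}]\,\ud y$, and the smoothness of $\psi$ together with $|y|_G \lesssim 1$ on $\supp \nu$ gives $\|\psi_\ell \ast \nu\|_{L^1(G)} \lesssim 2^{-c\ell}$; Young's inequality then controls the operator norm. The case $\ell < 0$ is the \textbf{main obstacle}, and is where the curvature hypothesis $(\mathrm{CA})$ enters, through an iterated $T^*T$ argument. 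Set $N_m := 2^m - 1$ and $Q_m := T_{\psi_\ell \ast \nu^{(N_m)}} = T_{\nu^{(N_m)}} A$ with $A := T_{\psi_\ell}$, where $\nu^{(N)}$ is defined by \eqref{eq: conv prod} with $\sigma$ replaced by $\nu$; thus $Q_0 = T_{\psi_\ell \ast \nu}$ is the operator of interest. The recursion \eqref{eq: conv prod} gives $\nu^{(N_m)} = (\nu \ast \tilde\nu)^{\ast 2^{m-1}}$ for $m \geq 1$, so each $\nu^{(N_m)}$ is reflection-invariant and, together with the base identity $\nu \ast \tilde\nu = \nu^{(1)}$, one has $\nu^{(N_m)} \ast \widetilde{\nu^{(N_m)}} = \nu^{(N_{m+1})}$ for every $m \geq 0$. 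Using $T_\mu^* = T_{\tilde\mu}$ and $T_\mu^* T_\mu = T_{\mu \ast \tilde\mu}$, this yields
\begin{equation*}
    Q_m^* Q_m = A^*\, T_{\nu^{(N_m)}}^*\, T_{\nu^{(N_m)}}\, A = A^*\, T_{\nu^{(N_m)} \ast \widetilde{\nu^{(N_m)}}}\, A = A^*\, T_{\nu^{(N_{m+1})}}\, A = A^* Q_{m+1},
\end{equation*}
whence $\|Q_m\|_{L^2 \to L^2}^2 \leq \|A\|_{L^2 \to L^2}\,\|Q_{m+1}\|_{L^2 \to L^2}$. Iterating this telescoping inequality and using $\|A\|_{L^2 \to L^2} \leq \|\psi\|_{L^1(G)} \lesssim 1$ produces $\|Q_0\|_{L^2 \to L^2} \lesssim \|Q_{m_0}\|_{L^2 \to L^2}^{2^{-m_0}}$ for every $m_0$.

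The role of the iteration is that, by $(\mathrm{CA})$, once $m_0$ is large enough that $N_{m_0}$ exceeds the exponent in Definition~\ref{def: CA}, the measure $\nu^{(N_{m_0})}$ is absolutely continuous with a density $h$ obeying the $L^1$-Hölder bound of $(\mathrm{CA})$. Here I use that $(\mathrm{CA})$ is inherited by $\nu = \sigma - \phi$: expanding $\nu^{(N)}$ multilinearly, the pure term is exactly $\sigma^{(N)}$, to which $(\mathrm{CA})$ applies, while every other term contains a factor of the smooth, compactly supported density $\phi$ and is therefore absolutely continuous with a smooth density satisfying the same bound with a constant uniform in the relevant parameters. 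I then peel a single mean-zero factor $\psi_\ell$ against $h$: since $\int_G \psi_\ell = 0$,
\begin{equation*}
    \|\psi_\ell \ast h\|_{L^1(G)} \leq \int_G |\psi_\ell(y)| \Big( \int_G |h(y^{-1}\bdot x) - h(x)|\,\ud x \Big)\,\ud y \lesssim_{\sigma} \int_G |\psi_\ell(y)|\,|y|_G^{\gamma}\,\ud y \lesssim 2^{\ell \gamma},
\end{equation*}
using the left-translation half of the $(\mathrm{CA})$ bound and $\supp \psi_\ell \subseteq \{|y|_G \lesssim 2^\ell\}$. For $\ell < 0$ this is $2^{-|\ell|\gamma}$, so $\|Q_{m_0}\|_{L^2 \to L^2} \leq \|\psi_\ell \ast h\|_{L^1(G)} \lesssim 2^{-|\ell|\gamma}$, and the telescoping yields $\|T_{\psi_\ell \ast \nu}\|_{L^2 \to L^2} \lesssim 2^{-\gamma 2^{-m_0}|\ell|}$, as required. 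It is precisely to make this last step available that $(\mathrm{CA})$ carries both the left- and right-translation differences.

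It remains to pass from the single-scale bound to the two estimates for $S_\ell[\nu]$. For the $L^2$ square-function bound I use $\|S_\ell[\nu]f\|_{L^2}^2 = \inn{\sum_k T_k^* T_k f}{f}$ with $T_k := T_{\psi_{k+\ell}\ast\nu_k}$, and control $\|\sum_k T_k^* T_k\|_{L^2 \to L^2}$ by a Cotlar--Stein almost-orthogonality argument: the diagonal terms are governed by the single-scale bound $\|T_k\|_{L^2\to L^2} \lesssim 2^{-c|\ell|}$, while the off-diagonal terms decay like $2^{-\varepsilon|k-k'|}$ because in $T_k T_{k'}^*$ the two mean-zero Littlewood--Paley factors $\psi_{k+\ell}$ and $\psi_{k'+\ell}$ meet at separated scales; this gives $\|S_\ell[\nu]\|_{L^2 \to L^2} \lesssim 2^{-c'|\ell|}$. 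For the $L^{p_0}$ bound I regard $f \mapsto (f \ast \psi_{k+\ell}\ast\nu_k)_k$ as a vector-valued convolution operator with $\ell^2$-valued kernel; its components are smooth, mean-zero bumps with the expected size and Hölder estimates (with constants growing at most polynomially in $|\ell|$), so the Calderón--Zygmund theory on homogeneous groups (cf.\ \cite{FS_book}) yields $\|S_\ell[\nu]\|_{L^{p_0} \to L^{p_0}} \lesssim (1+|\ell|)^{M}$. Interpolating the two bounds gives Proposition~\ref{prop: square function} for $1 < p < 2$, while $p = 2$ is the Cotlar--Stein estimate itself. The crux throughout is the $\ell < 0$ single-scale bound: arranging the $T^*T$ iteration so that the \emph{alternating} products $\nu^{(N)}$ of \eqref{eq: conv prod} appear cleanly, rather than being polluted by interspersed fine-scale bumps whose Hölder constants would blow up with $|\ell|$, is exactly what allows $(\mathrm{CA})$ to be invoked with a scale-invariant constant.
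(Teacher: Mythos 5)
Your argument is correct and follows essentially the same route as the paper: an iterated $T^{*}T$ argument converting $(\mathrm{CA})$ into a single-scale $L^2$ bound with geometric decay in $|\ell|$, almost-orthogonality to assemble the square function at $L^2$, a Calder\'on--Zygmund endpoint estimate with at most polynomial growth in $|\ell|$, and interpolation --- the paper merely packages the endpoint via Khintchine randomization and a scalar weak-$(1,1)$ bound for $\sum_k r_k T_\ell^k$ rather than vector-valued Calder\'on--Zygmund theory, and handles $\nu = \sigma - \phi$ by linearity \emph{before} iterating (its $(\Sigma\mathrm{CA})$ device) rather than by multilinear expansion afterwards. The only points you leave implicit are routine but worth recording: that the $L^1$-H\"older regularity in $(\mathrm{CA})$ persists under further convolution by finite measures (needed because your dyadic iteration lands on $N_{m_0} = 2^{m_0}-1$, which generally overshoots the exponent $N$ of Definition~\ref{def: CA}), and that the mixed terms in the expansion of $\nu^{(N)}$ containing a factor $\phi$ satisfy the two-sided translation bound, which for the right-translation half requires controlling the conjugation $y \mapsto z\bdot y \bdot z^{-1}$ uniformly over the compact supports.
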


By our earlier observations (and, in particular, \eqref{eq: sf domination}), Proposition~\ref{prop: square function} immediately implies Theorem~\ref{thm: main} in the restricted range $1 < p \leq 2$; the remaining cases then follow via interpolation with the trivial bound for $p = \infty$.

%%%%%%%%%%%%%%%%%%%%%%%%%%%%%%%%%%%%%%%%%%%%%%%%%%%%%%%%%%%%%%%%%%%%%%%%%%%%%%%%%%%%%%%%%%%%%%%%

%   Linearised square function

%%%%%%%%%%%%%%%%%%%%%%%%%%%%%%%%%%%%%%%%%%%%%%%%%%%%%%%%%%%%%%%%%%%%%%%%%%%%%%%%%%%%%%%%%%%%%%%%

\subsubsection*{Linearised square function} In order to prove Proposition~\ref{prop: square function}, we apply a standard randomisation procedure to linearise the square function. Let $r = (r_k)_{k \in \Z}$ be a sequence of IID random signs, with each $r_k$ taking the values $+1$ and $-1$ with equal probability. Consider the function 
\begin{equation*}
    T_{\ell,r}f := \sum_{k \in \Z} r_k T_{\ell}^kf \qquad \textrm{where} \qquad T_{\ell}^kf := f \ast \psi_{k+\ell} \ast \nu_k
\end{equation*}
In view of Khintchine's inequality, to prove Proposition~\ref{prop: square function}, it suffices to show that for all $1 < p \leq 2$ there exists some $\varepsilon(p) > 0$ such that the norm bound
  \begin{equation}\label{eq: linearised square function}
     \|T_{\ell,r}\|_{L^p(G) \to L^p(G)} \lesssim_p 2^{-\varepsilon(p)|\ell|}
 \end{equation} 
 holds for all $\ell \in \Z$ with a constant independent of realisation of $r$. Furthermore, \eqref{eq: linearised square function} in turn follows from a pair of endpoint estimates. 
 
\begin{lemma}\label{lem: almost orthogonal}
    There exists $\rho > 0$ such that
\begin{equation}\label{c1a.o.}
    \|(T_{\ell}^k)^{\ast}T_{\ell}^j\|_{L^2(G) \rightarrow L^2(G)} + \|T_{\ell}^k(T_{\ell}^j)^{\ast}\|_{L^2(G) \rightarrow L^2(G)} \lesssim_{\sigma} \min \{ 2^{ - \rho|\ell|}, 2^{-\rho|j-k|} \}
\end{equation}
 holds for all $j,k,\ell \in \Z$.
\end{lemma}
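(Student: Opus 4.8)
The plan is to control both composite operators through their convolution kernels and to isolate two independent sources of decay. Write $F_k := \psi_{k+\ell} \ast \nu_k$, so that $T_\ell^k f = f \ast F_k$; each $F_k$ is a mean zero $L^1$ function (since $\int \psi = 0$ and $\nu(G)=0$) with $\|F_k\|_{L^1} \lesssim_\sigma 1$. Since left translations are $L^2$-isometries, every convolution operator $g \mapsto g \ast \mu$ satisfies $\|A[\mu]\|_{L^2 \to L^2} \le \|\mu\|$, so it suffices to estimate the relevant kernels in $L^1$ or in total variation. I would first reduce \eqref{c1a.o.} to two statements: (A) a scale-separation bound $\lesssim_\sigma 2^{-\rho|j-k|}$, and (B) a diagonal bound $\|T_\ell^k\|_{L^2 \to L^2} \lesssim_\sigma 2^{-\rho|\ell|/2}$. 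Given (B), the off-diagonal $\ell$-decay is immediate from submultiplicativity, $\|(T_\ell^k)^\ast T_\ell^j\| \le \|T_\ell^k\|\,\|T_\ell^j\|$ and likewise for $T_\ell^k (T_\ell^j)^\ast$; taking the minimum of (A) with this bound yields \eqref{c1a.o.}. By conjugating with a dilation, which preserves operator norms, I may assume $k=0$ in (B).

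For (A), observe that $(T_\ell^k)^\ast T_\ell^j$ and $T_\ell^k (T_\ell^j)^\ast$ have kernels $F_j \ast \tilde F_k$ and $\tilde F_j \ast F_k$ respectively, each a convolution of two mean zero kernels smooth at scales $2^{j+\ell}$ and $2^{k+\ell}$. Assuming say $j > k$, I would exploit the cancellation of the more concentrated factor against the smoothness of the other: writing $(F_j \ast \tilde F_k)(x) = \int [F_j(x y^{-1}) - F_j(x)]\, \tilde F_k(y)\,\ud y$ and using the first-order estimate $\|F_j(\cdot\, y^{-1}) - F_j\|_{L^1} \lesssim_\sigma |y|_G\, 2^{-(j+\ell)}$ together with $\supp \tilde F_k \subseteq \{|y|_G \lesssim 2^{k + \max(\ell,0)}\}$ gives $\|F_j \ast \tilde F_k\|_{L^1} \lesssim_\sigma 2^{-(|j-k| - |\ell|)}$, which suffices once $|j - k| \ge 2|\ell|$. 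In the complementary range $|j-k| < 2|\ell|$ the bound (B) already gives $\lesssim_\sigma 2^{-\rho|\ell|} \le 2^{-\rho|j-k|/2}$, so after adjusting constants (A) holds in all cases; the estimate for $\tilde F_j \ast F_k$ is identical by symmetry.

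The heart of the matter is (B), i.e. $\|T_\ell^0\|_{L^2 \to L^2}\lesssim_\sigma 2^{-\rho|\ell|/2}$ with $T_\ell^0 = A[\nu] A[\psi_\ell]$. When $\ell \ge 0$ the measure $\nu$ sits at the finer scale and its cancellation against the smoothness of $\psi_\ell$ gives $\|T_\ell^0\| \le \|\psi_\ell \ast \nu\|_{L^1} \lesssim_\sigma 2^{-\ell}$ directly. The difficult case is $\ell < 0$, where $\psi_\ell$ is finer than $\nu$ and the requisite smoothness can only come from the Curvature Assumption. Here I would set $Q := A[\psi_\ell]$ (taking $\psi$ real, so $Q^\ast = A[\tilde\psi_\ell]$) and note that $A := A[\nu]^\ast A[\nu] = A[\nu^{(1)}]$ is positive and self-adjoint, with $\|T_\ell^0\|^2 = \|Q^\ast A Q\|$. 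The key device, replacing the Fourier-analytic extraction of roots available in the Euclidean setting, is the elementary spectral convexity inequality
\begin{equation*}
  \|Q^\ast A Q\| \le \|Q^\ast A^m Q\|^{1/m}\,\|Q\|^{2(1 - 1/m)}, \qquad m \in \N,
\end{equation*}
proved by applying Hölder's inequality to the spectral measure of $A$ at $Qf$. Since $A^m = A[(\nu^{(1)})^{\ast m}] = A[\nu^{(2m-1)}]$, where $\nu^{(N)}$ is defined by the recursion \eqref{eq: conv prod} with $\sigma$ replaced by $\nu$, I would fix $m$ with $2m - 1 \ge N$, the exponent furnished by Definition~\ref{def: CA}.

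It then remains to bound $\|Q^\ast A[\nu^{(2m-1)}] Q\|$. Expanding $\nu = \sigma - \phi$, the term in $\nu^{(2m-1)}$ with every factor an (adjoint of a) copy of $\sigma$ is $\sigma^{(2m-1)}$, which is absolutely continuous with an $L^1$ Hölder modulus by $(\mathrm{CA})$, this property persisting for all orders $\ge N$ under further convolution with a finite measure, while every other term contains a factor of $\phi \in C_c^\infty$ and is therefore smooth. Hence $\nu^{(2m-1)}$ has a density $h$ obeying the modulus-of-continuity bound of Definition~\ref{def: CA}. Since $Q^\ast A[\nu^{(2m-1)}] Q$ has kernel $\psi_\ell \ast h \ast \tilde\psi_\ell$, writing $(\psi_\ell \ast h)(x) = \int \psi_\ell(z) [h(z^{-1} x) - h(x)]\,\ud z$ and using the mean zero of $\psi_\ell$ with $\supp \psi_\ell \subseteq \{|z|_G \lesssim 2^\ell\}$ gives $\|\psi_\ell \ast h\|_{L^1} \lesssim_\sigma 2^{\ell \gamma}$, whence $\|Q^\ast A[\nu^{(2m-1)}] Q\| \lesssim_\sigma 2^{-|\ell|\gamma}$. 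Feeding this into the convexity inequality yields $\|T_\ell^0\|^2 \lesssim_\sigma 2^{-(\gamma/m)|\ell|}$, which is (B) with $\rho = \gamma/m$. The main obstacle is precisely this $\ell < 0$ regime: a single $T^\ast T$ only ever produces the first convolution power $\nu^{(1)}$, so the genuinely new input is the combination of the spectral convexity inequality, which reaches the high power $\nu^{(2m-1)}$ without interposing further copies of $\psi$, with the regularity supplied by $(\mathrm{CA})$; checking that $\nu^{(2m-1)}$ inherits the Hölder-$L^1$ modulus from $\sigma^{(N)}$ is the remaining point requiring care.
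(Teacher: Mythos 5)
Your overall architecture (reduce to a scale-separation bound (A) and a diagonal bound (B), then combine by submultiplicativity) is sound, but step (A) contains a genuine gap. The estimate $\|F_j(\,\cdot\,\bdot\, y^{-1})-F_j\|_{L^1}\lesssim_{\sigma}|y|_G\,2^{-(j+\ell)}$ is not available: in $F_j=\psi_{j+\ell}\ast\nu_j$ the smoothing factor sits to the \emph{left} of the measure, so $F_j$ has good \emph{left}-translation regularity ($X^R F_j=(X^R\psi_{j+\ell})\ast\nu_j$), while a left-invariant derivative — which is what controls right-translation differences — would have to land on $\nu_j$. Concretely, since $x\bdot y^{-1}\bdot w^{-1}=(x\bdot w^{-1})\bdot(w\bdot y^{-1}\bdot w^{-1})$, one finds
\begin{equation*}
\int_G|F_j(x\bdot y^{-1})-F_j(x)|\,\ud x\;\le\;\int_G\int_G\big|\psi_{j+\ell}\big(x\bdot(w\bdot y^{-1}\bdot w^{-1})\big)-\psi_{j+\ell}(x)\big|\,\ud x\,\ud|\nu_j|(w),
\end{equation*}
and the conjugate $w\bdot y^{-1}\bdot w^{-1}$ with $|w|_G\sim 2^j$ is in general far larger than $y$: in $\H^1$, for $w=(a,0)$ and $y=(b,0)$ one gets $w\bdot y^{-1}\bdot w^{-1}=(-b,-2a^{\top}Jb)$, of homogeneous norm $\sim(2^j|y|_G)^{1/2}\gg|y|_G$. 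The same obstruction appears if you instead pair the mean zero of $F_j$ against left-translation differences of $\tilde F_k=\tilde\nu_k\ast\tilde\psi_{k+\ell}$, whose smooth factor is on the wrong side as well. One can salvage geometric decay in $|j-k|$ by proving a quantitative conjugation-distortion lemma (the loss is polynomial, with exponent governed by $\lambda_1/\lambda_n$), but as written the step is false in any noncommutative $G$. The paper avoids the issue entirely: it peels off $\psi_{j+\ell}$ and $\tilde\psi_{k+\ell}$ by Young's inequality and extracts the $2^{-\rho|j-k|}$ decay from $A[\nu_j\ast\tilde\nu_k]$ alone via the iterated $T^{\ast}T$ argument and $(\mathrm{CA})$ — note that Definition~\ref{def: CA} demands \emph{both} one-sided $L^1$ moduli of $h$ precisely because of this left/right asymmetry.

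Step (B) is a genuinely different and essentially correct route to the diagonal decay: your spectral convexity inequality reaches $\nu^{(2m-1)}$ in a single application, where the paper iterates $\|A^{(n)}\|\le C\|A^{(n+1)}\|^{1/2}$ to reach $\vartheta^{(N)}$ with exponent $2^{-N}$ instead of your $1/m$. Two caveats. First, positivity of $A=A[\nu]^{\ast}A[\nu]$ forces you to keep $\nu$ whole, so you cannot reduce to a single $(\mathrm{CA})$ summand by linearity as the paper does; you must expand $\nu^{(2m-1)}$ multilinearly in $\sigma$ and $\phi$ and check every term has a density with the required $L^1$ Hölder modulus — and the persistence of a one-sided modulus under convolution by a measure on the matching side again passes through conjugated translates, so the "point requiring care" you flag is the same phenomenon that undoes (A). Second, minor: the $\ell\ge 0$ bound should be $2^{-\lambda_1\ell}$ rather than $2^{-\ell}$, via the homogeneous mean value inequality (Lemma~\ref{lem: mean value}) rather than a Euclidean first-order estimate.
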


\begin{lemma}\label{lem: weak type 11} For all $\varepsilon > 0$, we have
\begin{equation}\label{c1weak11}
\|T_{\ell,r}\|_{L^{1}(G) \rightarrow L^{1,\infty}(G)} \lesssim_{\sigma, \varepsilon} 2^{\varepsilon|\ell|},
\end{equation}
where the implicit constant is independent of the realisation of $r$ and $\ell \in \Z$.
\end{lemma}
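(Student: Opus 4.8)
The plan is to deduce Lemma~\ref{lem: weak type 11} from Calder\'on--Zygmund theory on the space of homogeneous type $(G,d,\ud x)$, where $d(x,y) := |y^{-1}\bdot x|_G$ and $\ud x$ is Haar measure; the quasi-triangle inequality \eqref{eq: quasi triangle} together with the $Q$-homogeneity of Haar measure makes this a doubling quasi-metric space. Since $T_{\ell,r}$ is the right-convolution operator $T_{\ell,r}f = f \ast K_{\ell,r}$ with kernel $K_{\ell,r} := \sum_{k \in \Z} r_k\, \psi_{k+\ell}\ast\nu_k$, the standard Calder\'on--Zygmund theorem reduces matters to two ingredients: (i) $L^2$-boundedness, and (ii) a H\"ormander regularity estimate for $K_{\ell,r}$, each with constant $\lesssim_\varepsilon 2^{\varepsilon|\ell|}$ and uniform in the realisation of $r$. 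For (i) I would invoke Lemma~\ref{lem: almost orthogonal}: writing $T_{\ell,r} = \sum_k r_k T_\ell^k$ and applying the Cotlar--Stein lemma, the almost-orthogonality bound \eqref{c1a.o.} gives $\|T_{\ell,r}\|_{L^2\to L^2} \lesssim \sum_{n\in\Z}\min\{2^{-\rho|\ell|/2}, 2^{-\rho|n|/2}\} \lesssim 2^{-\rho'|\ell|}$ for some $\rho' > 0$, which is in particular uniformly bounded.

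The substance of the argument is (ii). Unlike the $L^2$ bound, this step requires only that $\psi \in C_c^\infty(G)$ and that $\nu$ is a finite, compactly supported measure; the curvature assumption $(\mathrm{CA})$ plays no role here. Because the relevant kernel difference arises in the left variable of the Schwartz kernel $(x,z) \mapsto K_{\ell,r}(z^{-1}\bdot x)$, it suffices to bound
\[
\sup_{w \neq 0} \int_{|u|_G \geq C|w|_G} |K_{\ell,r}(w^{-1}\bdot u) - K_{\ell,r}(u)|\,\ud u \lesssim_\varepsilon 2^{\varepsilon|\ell|}
\]
for a suitable $C = C(C_G)$. By the triangle inequality this is controlled termwise by the $g_k := \psi_{k+\ell}\ast\nu_k$, for which I would record three facts: the size bound $\|g_k\|_{L^1} \lesssim 1$; the support bound $\supp g_k \subseteq \{|u|_G \lesssim 2^{k+\ell_+}\}$, where $\ell_+ := \max\{\ell,0\}$; and the mean regularity bound $\int_G |g_k(w^{-1}\bdot u) - g_k(u)|\,\ud u \lesssim \min\{1, (2^{-(k+\ell)}|w|_G)^{a}\}$ for some $a > 0$. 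The last estimate reduces, via Minkowski's inequality and right-invariance of Haar measure, to the mean modulus of continuity of $\psi_{k+\ell}$; rescaling to $\psi$ and using $\psi \in C_c^\infty$ together with the comparison between homogeneous and euclidean norms near the identity produces the displayed power gain.

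To conclude, fix $w$ with $|w|_G \sim 2^m$. Choosing $C \geq 2C_G$, the quasi-triangle inequality forces both $g_k(u)$ and $g_k(w^{-1}\bdot u)$ to vanish on the region $|u|_G \geq C|w|_G$ whenever $2^{k+\ell_+} \lesssim |w|_G$; hence only the scales with $k \gtrsim m - \ell_+$ contribute, and for these I apply the regularity bound. Summing the resulting geometric series gives $\sum_{k \gtrsim m - \ell_+} \min\{1, (2^{m-k-\ell})^a\} \lesssim |\ell| + 1$, uniformly in $m$ and in $r$; the factor $|\ell|$ arises in the regime $\ell < 0$, where $\psi_{k+\ell}$ lives at the finer scale $2^{k+\ell}$ while the support of $g_k$ is dictated by the coarser scale $2^k$ of $\nu_k$. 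Since $|\ell| + 1 \lesssim_\varepsilon 2^{\varepsilon|\ell|}$, this establishes the H\"ormander bound and, with (i), completes the proof.

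The main obstacle is precisely this two-scale bookkeeping: verifying that the far-field contribution vanishes except at scales comparable to or larger than $|w|_G$, and checking that the only loss is the harmless factor $|\ell|$. A minor technical point, handled by first proving the estimate for the truncations $\sum_{|k|\leq M}$ with constants uniform in $M$ and then passing to the limit, ensures the kernel $K_{\ell,r}$ and all the sums above are well defined.
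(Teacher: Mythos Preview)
Your proposal is correct and follows essentially the same route as the paper: reduce to Calder\'on--Zygmund theory on $G$, feed in the $L^2$ bound from Lemma~\ref{lem: almost orthogonal}, and verify the H\"ormander condition by combining the support bound $\supp g_k \subset \{|u|_G \lesssim 2^{k+\ell_+}\}$ with a mean regularity estimate on $\psi_{k+\ell}$, summing to get the $|\ell|+1$ loss. The paper carries out the regularity step via an explicit mean value lemma in terms of the right-invariant fields $X_j^R$ (giving the mixed-homogeneity bound $\sum_j (2^{-(k+\ell)}|y|_G)^{\lambda_j}$), but your single-exponent version $(2^{-(k+\ell)}|w|_G)^a$ is equivalent for present purposes.
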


Once Lemma~\ref{lem: almost orthogonal} and Lemma~\ref{lem: weak type 11} are established, Proposition~\ref{prop: square function} (and hence Theorem~\ref{thm: main}) follow in a straightforward matter.

\begin{proof}[Proof of Proposition~\ref{prop: square function}]
Fix $\ell \in \Z$. By combining Lemma~\ref{lem: almost orthogonal} with the classical Cotlar--Stein lemma (see, for instance, \cite[Chapter VII, \S2]{Stein_book}), we deduce that there exists $\rho > 0$ (not necessarily the same as that in the statement of Lemma~\ref{lem: almost orthogonal}) such that 
\begin{align} \label{c1l2bound}
 \|T_{\ell,r}\|_{L^2(G) \rightarrow L^2(G)} \lesssim_{\sigma} 2^{-\rho|\ell|},
\end{align}
where the implicit constant is independent of the realisation of $r$. Fix $1 < p \leq 2$. By interpolating between \eqref{c1l2bound} and the weak $L^1$ estimate from Lemma~\ref{lem: weak type 11} (with $\varepsilon$ chosen sufficiently small, depending on $p$), we obtain \eqref{eq: linearised square function}. As previously noted, a standard argument using Khintchine's inequality then yields the square function bound in Proposition~\ref{prop: square function}.
\end{proof}

%%%%%%%%%%%%%%%%%%%%%%%%%%%%%%%%%%%%%%%%%%%%%%%%%%%%%%%%%%%%%%%%%%%%%%%%%%%%%%%%%%%%%%%%%%%%%%%%

%   Almost orthogonality

%%%%%%%%%%%%%%%%%%%%%%%%%%%%%%%%%%%%%%%%%%%%%%%%%%%%%%%%%%%%%%%%%%%%%%%%%%%%%%%%%%%%%%%%%%%%%%%%

\subsection{Almost orthogonality} In this subsection we present the proof of Lemma~\ref{lem: almost orthogonal}. For this, it is convenient to introduce a slight generalisation of the curvature assumption. Let $\sigma$ be a finite Borel measure on $G$. We say that $\sigma$ satisfies $\textrm{{($\Sigma$CA)}}$ if it can be written as a sum $\sigma = \sigma_1 + \dots + \sigma_m$ where each $\sigma_j$ is a finite Borel measure on $G$ which satisfies $(\mathrm{CA})$. Lemma~\ref{lem: almost orthogonal} is then a fairly direct consequence of the following result. 

\begin{lemma}\label{lem: key L2 bound}
Suppose $\mu, \vartheta$ are finite, compactly supported Borel measures on $G$ which are mean zero and satisfy $(\Sigma\mathrm{CA})$. Then there exists $\rho > 0$ such that 
\begin{equation}\label{eq: key L2 bound}
  \|A[ \mu_a \ast \vartheta_b]\|_{L^2(G) \rightarrow L^2(G)} \lesssim_{\mu,\vartheta}
    2^{-\rho|a-b|}  \qquad \textrm{for all $a,b \in \Z$.}  
\end{equation}
\end{lemma}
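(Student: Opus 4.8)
The plan is to prove $\|A[\mu_a \ast \vartheta_b]\|_{L^2 \to L^2} \lesssim 2^{-\rho|a-b|}$ by first collapsing the two scale parameters into one and then running an iterated $T^\ast T$ argument that converts the curvature assumption $(\mathrm{CA})$ into a quantitative gain, with the mean-zero hypothesis supplying the cancellation.

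First I would reduce to a single scale. Since the dilations are automorphisms, $\mu_a \ast \vartheta_b = (\mu \ast \vartheta_{b-a})_a$, and the operator $A[\omega_a]$ is conjugate to $A[\omega]$ by the $L^2$-dilation $f \mapsto f \circ \delta_{2^a}$, which is a scalar multiple of a unitary and hence preserves the operator norm. Thus $\|A[\mu_a \ast \vartheta_b]\|_{L^2 \to L^2} = \|A[\mu \ast \vartheta_n]\|_{L^2 \to L^2}$ with $n := b-a$, and it suffices to prove decay in $|n|$. Taking adjoints and using $\widetilde{\mu \ast \vartheta_n} = \tilde{\vartheta}_n \ast \tilde{\mu}$ together with a further rescaling interchanges the roles of $\mu$ and $\vartheta$ and replaces $n$ by $-n$; as $\tilde{\mu}, \tilde{\vartheta}$ are again mean zero and satisfy $(\Sigma\mathrm{CA})$, we may assume $n \geq 0$. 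Finally, since $\ast$ is bilinear, I would expand into finitely many pieces so as to reduce to the case where the absolute continuity of the large-scale factor is furnished by a single measure satisfying $(\mathrm{CA})$, while retaining the mean-zero cancellation of the small-scale factor $\mu$.

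The engine is the identity $\|A[\tau]\|_{L^2 \to L^2}^{2m} = \|A[\tau^{(2m-1)}]\|_{L^2 \to L^2}$, valid because $A[\tau]^\ast A[\tau] = A[\tau \ast \tilde{\tau}] = A[\tau^{(1)}]$ and $\tau^{(2m-1)} = (\tau \ast \tilde{\tau})^{\ast m}$. Applying this to $\tau = \mu \ast \vartheta_n$ and choosing $2m-1 \geq N$, where $N$ is the exponent from $(\mathrm{CA})$ for $\vartheta$, the plan is to show $\|A[(\mu \ast \vartheta_n)^{(2m-1)}]\|_{L^2 \to L^2} \lesssim 2^{-\gamma n}$; since the $L^2$ operator norm of convolution against a finite measure is at most its total variation, it suffices to prove that $(\mu \ast \vartheta_n)^{(2m-1)}$ is absolutely continuous with $L^1$ density of norm $\lesssim 2^{-\gamma n}$, whence $\rho = \gamma/(2m)$. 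To see where the two hypotheses enter, rescale by $\delta_{2^{-n}}$: the product becomes an alternating convolution of the unit-scale blocks $\vartheta^{(1)} = \vartheta \ast \tilde{\vartheta}$ with the mean-zero blocks $(\tilde{\mu} \ast \mu)_{-n}$ supported where $|y|_G \lesssim 2^{-n}$ (together with the mean-zero border factors $\mu_{-n}, \tilde{\mu}_{-n}$). The curvature of $\vartheta$ is what makes a full self-convolution of the $\vartheta$-blocks absolutely continuous with a Hölder-$L^1$ density $h$, while the mean-zero cancellation of $\mu$, now concentrated at scale $2^{-n}$, produces the decaying factor: the model estimate is that convolving such a density against a mean-zero measure supported where $|y|_G \lesssim 2^{-n}$ gains $2^{-\gamma n}$, by writing the convolution as an average of differences $h(x \bdot y^{-1}) - h(x)$ and invoking condition (ii) of $(\mathrm{CA})$, namely $\int_G |h(x \bdot y^{-1}) - h(x)| \,\ud x \lesssim |y|_G^\gamma$.

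The main obstacle is precisely the interleaving of the two scales in $(\mu \ast \vartheta_n)^{(2m-1)}$: the $\vartheta$-blocks that must be amalgamated into a high self-convolution (in order to invoke absolute continuity via $(\mathrm{CA})$) are separated by the small-scale mean-zero $\mu$-blocks, so one cannot simply reassociate the product to place $\vartheta^{(2m-1)}$ adjacent to a cancelling factor. Resolving this is the technical heart of the argument and is where the full strength of the $L^1$-Hölder modulus in $(\mathrm{CA})$ is used. I would proceed by a telescoping/perturbation analysis, treating each interior $\mu$-block as a small-scale mean-zero perturbation inserted between $\vartheta$-blocks and bounding the resulting differences through the $L^1$-continuity of the (absolutely continuous) amalgamated $\vartheta$-density under small internal translations of a factor, so that each insertion contributes a factor $2^{-\gamma' n}$ while preserving absolute continuity. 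Establishing this quantitative $L^1$-Hölder dependence uniformly, and summing the telescoped terms to the claimed bound $\lesssim 2^{-\gamma n}$, is the step I expect to require the most care.
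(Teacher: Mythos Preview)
Your reductions --- rescaling to a single parameter, passing to adjoints to assume $a \leq b$, and decomposing $\vartheta$ by linearity into pieces each satisfying $(\mathrm{CA})$ --- match the paper exactly, and the idea of an iterated $T^\ast T$ argument is the right one. Where your proposal diverges is in \emph{what} you iterate on, and this is precisely where you run into the obstacle you flag but do not resolve.

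By iterating $T^\ast T$ on the full operator $A[\mu \ast \vartheta_n]$, you produce $(\mu \ast \vartheta_n)^{(2m-1)}$, in which the $\vartheta^{(1)}$-blocks are separated by $(\tilde{\mu} \ast \mu)$-blocks. Your telescoping proposal then hinges on removing these interior $\mu$-blocks at cost $2^{-\gamma' n}$ apiece, but that cost estimate requires the \emph{adjacent} $\vartheta$-material to already carry an $L^1$-H\"older density --- something $(\mathrm{CA})$ guarantees only for $\vartheta^{(N)}$, not for $\vartheta^{(1)}$. This is a chicken-and-egg problem: you need a contiguous $\vartheta^{(N)}$ to extract the H\"older gain, the interior $\mu$-blocks obstruct contiguity, and you cannot remove those blocks without the gain. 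The sketch does not break this circularity.

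The paper sidesteps the interleaving entirely by iterating differently. After rescaling to $A^{(0)} := A[\mu_{a-b} \ast \vartheta]$, set $A^{(n)} := A[\mu_{a-b} \ast \vartheta^{(n)}]$ and compute
\[
(A^{(n)})^\ast A^{(n)} \;=\; A\big[\mu_{a-b} \ast \vartheta^{(n)} \ast \widetilde{\vartheta^{(n)}} \ast \tilde{\mu}_{a-b}\big]
\;=\; A\big[\mu_{a-b} \ast \vartheta^{(n+1)} \ast \widetilde{\vartheta^{(n-1)}} \ast \tilde{\mu}_{a-b}\big],
\]
using the reassociation $\vartheta^{(n)} \ast \widetilde{\vartheta^{(n)}} = \vartheta^{(n+1)} \ast \widetilde{\vartheta^{(n-1)}}$ (both sides equal the alternating product of length $2n+2$). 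Stripping the tail $\widetilde{\vartheta^{(n-1)}} \ast \tilde{\mu}_{a-b}$ by Young's inequality gives $\|A^{(n)}\| \leq \big(\|\mu\|\,\|\vartheta\|^{n}\big)^{1/2}\,\|A^{(n+1)}\|^{1/2}$. Iterating to $n = N$ yields $\|A^{(0)}\| \lesssim \|A^{(N)}\|^{2^{-N}}$, with the \emph{single} factor $\mu_{a-b}$ still sitting at the boundary of a \emph{contiguous} $\vartheta^{(N)} = h$; one application of the mean-zero/H\"older estimate then gives $\|A^{(N)}\| \leq \|\mu_{a-b} \ast h\|_{L^1} \lesssim 2^{\gamma(a-b)}$, hence $\rho = \gamma\,2^{-N}$. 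The point is that the $\mu$-factor never gets duplicated into the interior: the extra copies of $\tilde{\mu}_{a-b}$ produced by each $T^\ast T$ step are discarded via Young, so no interleaving ever occurs and no telescoping is needed.
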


\begin{proof}[Proof (Lemma~\ref{lem: key L2 bound} $\implies$ Lemma~\ref{lem: almost orthogonal})]
By unwinding definitions, we may write
\begin{align}
    (T_{\ell}^k)^{\ast}(T_{\ell}^j)  &= A[ \psi_{j+\ell} \ast \nu_j \ast  \tilde{\nu}_k \ast  \tilde{\psi}_{k+\ell}]   \label{c1eq: avgform_T1} 
 \end{align}
 and
 \begin{align}
      (T_{\ell}^k) (T_{\ell}^j)^{\ast}  &= A[\tilde{\nu}_j \ast \tilde{\psi}_{j+\ell}  \ast  \psi_{k+\ell} \ast \nu_k ]. \label{c1eq: avgform_T2}
\end{align}

Recall that the measures $\psi,\nu$ are mean zero. Furthermore, $\phi$ satisfies $(\mathrm{CA})$ and $\nu = \sigma - \phi$ is a difference of measures satisfying $(\mathrm{CA})$. We therefore have an estimate of the form \eqref{eq: key L2 bound} whenever the measures $\mu$ and $\vartheta$ are taken from the collection $\{\psi,\nu,\tilde{\psi},\tilde{\nu}\}$. In order to bound the operators in \eqref{c1eq: avgform_T1} and \eqref{c1eq: avgform_T2}, consider the pairs of measures listed in Table~\ref{tab: L2 bound}. Thus, by Lemma~\ref{lem: key L2 bound}, we can find $\rho > 0$ such that
\begin{align}
    \|A[{\psi}_{j+\ell} \ast \nu_j]\|_{L^2(G) \rightarrow L^2(G)} + \|A[  \tilde{\nu}_j \ast \tilde{\psi}_{j+\ell}]\|_{L^2(G) \rightarrow L^2(G)} & \lesssim_{\sigma} 2^{-\rho|\ell|}, \label{c1eq: avgest1}
\end{align}
and 
\begin{align}
    \|A[  \nu_j \ast \tilde{\nu}_k]\|_{L^2(G) \rightarrow L^2(G)} + \|A[\tilde{\psi}_{j+\ell} \ast {\psi}_{k+\ell}]\|_{L^2(G) \rightarrow L^2(G)} & \lesssim_{\sigma} 2^{-\rho|k - j|}. \label{c1eq: avgest2} 
\end{align}

\begin{table}
    \centering
    \renewcommand{\arraystretch}{1.5} % Default value: 1
\begin{tabular}{ | m{7em} | m{1em} |  m{2em}| m{2em} | m{2em} | m{7em} |} 
  \hline
  Operator & $\mu$ & $\vartheta$ & $a$ & $b$ & $2^{-|a-b|} $ \\
  \hline 
  \hline 
 $A[ {\psi}_{j+\ell} \ast \nu_j ]$ & $\psi$ & $\nu$ & $j+\ell$ & $j$& $2^{-|\ell|}$ \\
  \hline
  $A[ \nu_j \ast \tilde{\nu}_k]$ & $\nu$ & $\tilde{\nu}$ & $j$ & $k$ & $2^{-|k - j|}$ \\ 
 \hline 
  $A[ \tilde{\nu}_j \ast \tilde{\psi}_{j+\ell}]$ & $\tilde{\nu}$ & $\tilde{\psi}$ & $j$ & $j+\ell$ & $2^{-|\ell|}$ \\ 
  \hline
  $A[ \tilde{\psi}_{j+\ell} \ast {\psi}_{k+\ell}]$ & $\tilde{\psi}$ & $\psi$ & $j+\ell$ & $k+\ell$& $2^{-|k-j|}$ \\
  \hline
\end{tabular}
\vspace{5pt}
    \caption{Applications of Lemma~\ref{lem: key L2 bound}.}
    \label{tab: L2 bound}
\end{table}

To deduce the statement of Lemma~\ref{lem: almost orthogonal} from the above estimates, we repeatedly apply Young's inequality\footnote{For Young's inequality over homogeneous groups, see \cite[Proposition 1.18]{FS_book}. The reference only treats the functional case; however, the version for measures follows in a similar manner.} to deduce that
\begin{align*}
    & \|(T_{\ell}^k)^{\ast} (T_{\ell}^j)\|_{L^2(G) \rightarrow L^2(G)} \\
    & \qquad \lesssim_{\sigma} \min \left \{ \|A[{\psi}_{j+\ell} \ast \nu_j]\|_{L^2(G) \rightarrow L^2(G)}, \|A[  \nu_j \ast \tilde{\nu}_k]\|_{L^2(G) \rightarrow L^2(G)} \right \}
\end{align*} 
and 
\begin{align*}
     &\|(T_{\ell}^k) (T_{\ell}^j)^{\ast}\|_{L^2(G) \rightarrow L^2(G)}\\
    & \qquad  \lesssim_{\sigma}  \min \left\{ \|A[  \tilde{\nu}_j \ast \tilde{\psi}_{j+\ell}]\|_{L^2(G) \rightarrow L^2(G)}, 
\|A[\tilde{\psi}_{j+\ell} \ast {\psi}_{k+\ell}]\|_{L^2(G) \rightarrow L^2(G)} \right\}.
\end{align*}
Combining these inequalities with \eqref{c1eq: avgest1} and \eqref{c1eq: avgest2}, we obtain \eqref{c1a.o.}, which completes the proof of Lemma~\ref{lem: almost orthogonal}.
\end{proof}

\begin{proof}[Proof (of Lemma~\ref{lem: key L2 bound})] Fix $\mu$ and $\vartheta$ as in the statement of the lemma and $a$, $b \in \Z$ with $a \leq b$. The adjoint of $A[\mu_a \ast \vartheta_b]$ is the operator $A[\tilde{\vartheta}_b \ast \tilde{\mu}_a]$. Since the hypotheses on $\mu$ and $\vartheta$ are preserved under reflection, it suffices to show there exists some $\rho > 0$ such that
    \begin{align*}
        \|A[\mu_a \ast \vartheta_b]\|_{L^2(G) \rightarrow L^2(G)} \lesssim_{\mu, \vartheta}
    2^{\rho(a-b)}.
    \end{align*}
By linearity, we may further assume without loss of generality that $\vartheta$ satisfies $(\mathrm{CA})$. 

By a simple scaling argument, 
\begin{equation} \label{c1eq: simplerescl}
    \|A[\mu_a \ast \vartheta_b]\|_{L^2(G) \rightarrow L^2(G)} = \|A[\mu_{a-b} \ast \vartheta]\|_{L^2(G) \rightarrow L^2(G)}.
\end{equation}
Let $\vartheta^{(n)}$ denote the $n$-fold convolution product as introduced in \eqref{eq: conv prod} and, for notational convenience, write
\begin{equation*}
    A^{(n)} := A[\mu_{a-b} \ast \vartheta^{(n)}].
\end{equation*}
We claim that 
\begin{align}\label{c1eq: inducgoal}
    \|A^{(n)}\|_{L^2(G) \rightarrow L^2(G)} \leq \|\mu\|^{1/2}\|\vartheta\|^{n/2}\|A^{(n+1)}\|_{L^2(G) \rightarrow L^2(G)}^{1/2}
\end{align}
holds for any $n \in \N_0$.

To prove the claim, we use the Hilbert space identity 
\begin{equation*}
    \|A^{(n)}\|_{L^2(G) \rightarrow L^2(G)} = \|(A^{(n)})^{\ast} ( A^{(n)})\|_{L^2(G) \rightarrow L^2(G)}^{1/2}.
\end{equation*}
However,
\begin{equation*}
    (A^{(n)})^{\ast} ( A^{(n)}) = A[\mu_{a-b} \ast \vartheta^{(n)} \ast \mathcal{R
}{({\vartheta}^{(n)})} \ast \widetilde{\mu}_{a-b} ]
\end{equation*}
 where $\mathcal{R}$ maps a measure $\varrho$ to its reflection $\tilde{\varrho}$. In view of \eqref{eq: conv prod}, we deduce that
 \begin{equation*}
\mu_{a-b} \ast \vartheta^{(n)}\ast \mathcal{R}(\vartheta^{(n)}) \ast \tilde{\mu}_{a-b} = 
\mu_{a-b} \ast \vartheta^{(n+1)}\ast \mathcal{R}({\vartheta}^{(n-1)}) \ast \tilde{\mu}_{a-b} .
 \end{equation*}
 
  Thus, by repeated applications of Young's inequality, we obtain the claim \eqref{c1eq: inducgoal} (note that the $(n-1)$th convolution product, as defined by \eqref{eq: conv prod}, involves the convolution of $n$ measures). 
  
  By \eqref{c1eq: simplerescl} and repeated application of \eqref{c1eq: inducgoal}, we deduce that 
 \begin{equation}\label{c1eq: cumgoal}
   \|A[\mu_a \ast \vartheta_b]\|_{L^2(G) \rightarrow L^2(G)} \lesssim_{\mu, \vartheta} \|A^{(n)}\|_{L^2(G) \rightarrow L^2(G)}^{1/2^{n}} \quad \text{for all $n \in \N_{0}$.}
 \end{equation}
Since $\vartheta$ satisfies $(\mathrm{CA})$, there exists some $N \in \N_{0}$ such that $\vartheta^{(N)}$ is absolutely continuous with respect to the Haar measure on $G$ with Radon--Nikodym derivative $h$ and, furthermore, there exists some $\gamma > 0$ such that
\begin{equation}\label{eq: CA vartheta}
    \int_{G}\left|h(y^{-1} \bdot x) - h(x)\right| \,\ud x  \lesssim_{\vartheta} |y|_G^{\gamma} \qquad \textrm{for all $y \in G$.}
\end{equation}
In view of \eqref{c1eq: cumgoal}, it suffices to estimate the operator norm of $A^{(N)}$ for this choice of exponent $N$. 

By Young's inequality, the operator bound for $A^{(N)}$ follows from an estimate on $\|\mu_{a-b} \ast \vartheta^{(N)}\|_{L^1(G)}$. Since $\mu$ has mean zero, it follows that
\begin{equation*}
\| \mu_{a-b} \ast \vartheta^{(N)}\|_{L^1(G)} = \|\mu_{a-b} \ast h\|_{L^1(G)} = \int_{G}\left|\int_{G}\left(h(y^{-1} \bdot x) - h(x)\right) \,\ud \mu_{a-b}(y) \right|\,\ud x .
\end{equation*}
Applying the triangle inequality, the Fubini--Tonelli theorem and \eqref{eq: CA vartheta}, we have
\begin{equation*}
\|\mu \ast \vartheta^{(N)}\|_{L^1(G)}
 \lesssim_{\vartheta} \int_{G}|y|_G^{\gamma} \,\ud |\mu_{a-b}|(y) = 2^{\gamma(a-b)}  \int_{G}|y|_G^{\gamma} \,\ud |\mu|(y).
\end{equation*}
Consequently,
 \begin{align*}
    \|A[\mu_a \ast \vartheta_b]\|_{L^2(G) \rightarrow L^2(G)} \lesssim_{\mu, \vartheta} 2^{\rho(a-b)} \qquad \textrm{for $\rho := 2^{-N} \gamma$}
\end{align*} 
 and this concludes the proof.
\end{proof}

\subsection{Calder\'{o}n--Zygmund estimates} In this subsection we present the proof of Lemma~\ref{lem: weak type 11}. The argument is based on Calder\'{o}n--Zygmund theory adapted to homogeneous groups.

\begin{proof}[Proof (of Lemma~\ref{lem: weak type 11})] Let $K_{\ell}$ and $K_{\ell}^k$ denote the kernels of $T_{\ell,r}$ and $T_{\ell}^k$, respectively, so that 
\begin{equation}\label{c1eq: kern_Kl}
    K_{\ell} = \sum_{k \in \Z} r_k K_{\ell}^k \qquad \textrm{and} \qquad K_{\ell}^k =  \psi_{k + \ell} \ast \nu_k = ( \psi_{\ell} \ast \nu)_k.
\end{equation}
By Calder\'{o}n--Zygmund theory adapted to the homogeneous  group setting (see, for instance, \cite[Chapter I, \S5]{Stein_book}), to prove \eqref{c1weak11} it suffices to verify the H\"{o}rmander condition
\begin{equation}\label{c1hormandertype}
    \sup_{y \in G} \int_{|x|_G \geq C_0|y|_G}|K_{\ell}(y^{-1} \bdot x) - K_{\ell}(x)|\,\ud x \lesssim_{\varepsilon}2^{\varepsilon|\ell|} \qquad \textrm{for all $\varepsilon > 0$}
\end{equation}
for some fixed constant $C_0 > 1$. In fact, we shall take $C_0 := 2C_G$ for $C_G \geq 1$ the constant appearing in the quasi triangle inequality \eqref{eq: quasi triangle}. 

In view of \eqref{c1eq: kern_Kl}, it is clear that \eqref{c1hormandertype} follows from the estimate 
\begin{equation}\label{c1sumikl}
    \sup_{y \in G} \ \sum_{k \in \Z} I_{\ell}^k(y) \lesssim_{\varepsilon}2^{\varepsilon|\ell|} \qquad \textrm{for all $\varepsilon > 0$}
\end{equation}
where
\begin{equation*}
    I_{\ell}^k(y) := \int_{|x|_G \geq C_0|y|_G}|K_{\ell}^k(y^{-1} \bdot x) - K_{\ell}^k(x)|\,\ud x.
\end{equation*}

To prove \eqref{c1sumikl}, we first identify $y \in G$ for which $I_{\ell}^k(y) = 0$. By unwinding the definition of $K_{\ell}^{k}$, we may write
\begin{equation}\label{c1Ikl_step2}
    I_{\ell}^k(y) = \int_{|x|_G \geq C_02^{-k}|y|_G}|(\psi_{\ell} \ast \nu)((\delta_{2^{-k}} y)^{-1}\bdot x) - (\psi_{\ell} \ast \nu)(x)|\,\ud x. 
\end{equation}
As $\nu$ and $\psi$ are compactly supported, we can find some $R > 1$ such that the support of $\psi_{\ell} \ast \nu$ is contained inside the ball $B(0, R 2^{\max\{\ell,0\}})$. Set $C := 2R$. We claim that
\begin{align}\label{c1eq: Ikl vanish}
    I_{\ell}^k(y) = 0 \qquad \text{whenever} \quad |y|_G \geq C 2^{ k + \max\{\ell,0\}}.
\end{align}
To see this, fix $y \in G$ such that $|y|_G \geq C 2^{ k + \max\{\ell,0\}}$. In view of \eqref{c1Ikl_step2}, it suffices to check that 
\begin{equation*}
    |(\delta_{2^{-k}}y)^{-1}\bdot x|_G,\; |x|_G \geq C2^{\max\{\ell, 0\}} \qquad \text{whenever} \quad |x|_G \geq C_{0}|\delta_{2^{-k}}y|_G.
\end{equation*}
The lower bound on $|x|_G$ is immediate. On the other hand, by \eqref{eq: quasi triangle} and our choice of $C_0$, we deduce that
\begin{equation*}
  |(\delta_{2^{-k}}y)^{-1}\bdot x|_G \geq  |\delta_{2^{-k}}y|_G \geq C2^{\max\{\ell, 0\}}.
\end{equation*} 
Thus, we have established \eqref{c1eq: Ikl vanish}.

By Young's inequality, the $L^1$ norm of the kernel $K_{\ell}^k$ is uniformly bounded. Consequently, we have the uniform estimate
 \begin{equation}\label{eq: CZ uniform bound}
     I_{\ell}^k(y) \lesssim 1 \qquad \text{ for any $k,\ell \in \Z$.}
 \end{equation}
In order to sum in $k$, we shall improve over \eqref{eq: CZ uniform bound} for certain exponent pairs by establishing an estimate with geometric decay in $k + \ell$. The key tool is a variant of the mean value theorem on $G$.   

  \begin{lemma}[Mean value theorem]\label{lem: mean value} Let $g \in C_c^{1}(G)$. For any $z \in G$, we have
\begin{equation*}
 \int_{G}|g(z \bdot x) - g(x)|\,\ud x \lesssim \sum_{j = 1}^n |z|_G^{{\lambda_j}}\|X^R_{j}g\|_{L^1(G)}. 
\end{equation*}
\end{lemma}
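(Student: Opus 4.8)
The goal is to control the $L^1$ difference $\int_G |g(z \bdot x) - g(x)|\,\ud x$ by homogeneous powers of $|z|_G$ paired with $L^1$ norms of the right-invariant derivatives $X_j^R g$. The plan is to interpolate between $g(z \bdot x)$ and $g(x)$ along a path in $G$ that connects the identity to $z$, built out of the one-parameter subgroups generated by the basis eigenvectors $X_1, \dots, X_n$. The crucial structural input is that the map $\Upsilon(t_1, \dots, t_n) = \exp(t_n X_n) \bdot \dots \bdot \exp(t_1 X_1)$ is a global diffeomorphism, so I can write $z = \exp(s_n X_n) \bdot \dots \bdot \exp(s_1 X_1)$ for a unique tuple $(s_1, \dots, s_n)$ depending on $z$.

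First I would telescope the difference $g(z \bdot x) - g(x)$ across the $n$ factors of $z$. Writing $z_m := \exp(s_m X_m) \bdot \dots \bdot \exp(s_1 X_1)$ (with $z_0$ the identity and $z_n = z$), I decompose
\begin{equation*}
    g(z \bdot x) - g(x) = \sum_{m=1}^n \big[ g(z_m \bdot x) - g(z_{m-1} \bdot x) \big].
\end{equation*}
For each $m$, the two terms differ only by a single factor $\exp(s_m X_m)$ on the far left of $z_{m-1} \bdot x$, so by the fundamental theorem of calculus and the right-invariance identity \eqref{eq: right-invariant}, I would write
\begin{equation*}
    g(z_m \bdot x) - g(z_{m-1} \bdot x) = \int_0^{s_m} (X_m^R g)\big(\exp(t X_m) \bdot z_{m-1} \bdot x\big)\,\ud t.
\end{equation*}
Integrating the absolute value over $x \in G$, applying Fubini--Tonelli, and using the left-invariance of Haar measure to absorb the group translation by $\exp(t X_m) \bdot z_{m-1}$, each term contributes at most $|s_m| \, \|X_m^R g\|_{L^1(G)}$.

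The remaining point is to convert the Euclidean-coordinate bound $\sum_m |s_m| \, \|X_m^R g\|_{L^1(G)}$ into the homogeneous form $\sum_j |z|_G^{\lambda_j} \|X_j^R g\|_{L^1(G)}$ claimed in the statement. This is where I would pay attention to the dilation structure: under $\delta_t$ the coordinate $s_m$ scales like $t^{\lambda_m}$ (since $\Upsilon$ intertwines $\delta_t$ with the weighted scaling $(t^{\lambda_1} s_1, \dots, t^{\lambda_n} s_n)$ on $\R^n$, because each $X_m$ is a $\lambda_m$-eigenvector of $\Lambda$). By homogeneity it therefore suffices to prove the estimate $|s_m| \lesssim |z|_G^{\lambda_m}$ on, say, the unit sphere $\{|z|_G = 1\}$ and then scale; on that compact set the continuous coordinate functions $s_m(z)$ are bounded, which gives the comparison. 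I expect this final homogeneity/scaling bookkeeping to be the main technical obstacle — not because it is deep, but because one must verify carefully that $\Upsilon$ is dilation-equivariant in the weighted sense and that the scaling of $|z|_G$ matches the scaling of each $s_m$. The differentiation and Fubini steps are routine once the telescoping decomposition is in place.
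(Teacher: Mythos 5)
Your proposal is correct and follows essentially the same route as the paper: telescope along the coordinates of the global diffeomorphism $\Upsilon$, apply the fundamental theorem of calculus together with the right-invariance identity \eqref{eq: right-invariant} and translation invariance of Haar measure to get the bound $\sum_m |s_m|\,\|X_m^R g\|_{L^1(G)}$, and then convert $|s_m|\lesssim |z|_G^{\lambda_m}$ by the same compactness-plus-dilation-equivariance argument the paper uses for its final inequality. No gaps.
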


Lemma~\ref{lem: mean value} is a variant of \cite[Theorem 1.33]{FS_book}; for completeness we present the proof at the end of the section. Here the $\lambda_j$ are the eigenvalues of the dilation matrix $\Lambda$ and the $X_j^R$ are right-invariant vector fields as defined in \S\ref{subsec: basic concepts}.

Fix $k,\ell \in \Z$. After relaxing the region of integration in \eqref{c1Ikl_step2}, we see that 
\begin{align*}
I_{\ell}^k(y)  &\leq    \int_{G} |(\psi_{\ell} \ast \nu)((\delta_{2^{-k}} y)^{-1}\bdot x) - (\psi_{\ell} \ast \nu)(x)|\,\ud x  \\
& = \int_{G} \left |\int_{G} \psi_{\ell}((\delta_{2^{-k}} y)^{-1}\bdot x\bdot z^{-1}) - \psi_{\ell}(x\bdot z^{-1}) \,\ud \nu(z) \right | \,\ud x \\
& \leq \|\nu\|\int_{G} |\psi_{\ell}((\delta_{2^{-k}} y)^{-1}\bdot x) - \psi_{\ell}(x)| \,\ud x. 
\end{align*}
We apply Lemma~\ref{lem: mean value} with $g = \psi_{\ell}$ and $z = (\delta_{2^{-k}}y)^{-1}$. As a consequence of \eqref{eq: right-invariant} and the identity $\delta_{2^{-\ell}}X_j = 2^{-\ell \lambda_j} X_j$, we have $X_j^R\psi_{\ell} = 2^{-\ell\lambda_j}(X^R_{j}\psi)_{\ell}$. We therefore deduce that
 \begin{equation*}
I_{\ell}^k(y)
     \lesssim_{\sigma} \sum_{j = 1}^n (2^{-k}|y|_G)^{{\lambda_j}}\|X^R_{j}(\psi_{\ell})\|_{L^1(G)} = \sum_{j = 1}^n (2^{-(k+\ell)}|y|_G)^{{\lambda_j}}\|X^R_{j}\psi\|_{L^1(G)}.
\end{equation*}
Combining this with the trivial estimate \eqref{eq: CZ uniform bound}, we have
\begin{equation}\label{c1eq: non_triv_est}
    I_{\ell}^k(y)
     \lesssim_{\sigma}  \min \Big\{ 1, \sum_{j = 1}^n (2^{-(k+\ell)}|y|_G)^{\lambda_j} \Big\}.
\end{equation}

To conclude the proof, we consider two separate cases.

\medskip

\noindent \textit{Case 1: $\ell \geq 0$.} By \eqref{c1eq: Ikl vanish} and \eqref{c1eq: non_triv_est}, we deduce that 
\begin{align*}
\sum_{ k \in \Z} I_{\ell}^k(y) & = \sum_{\substack{k \in \Z \\ |y|_G \leq C2^{\ell + k} }} I_{\ell}^k(y) 
& \lesssim_{\sigma} \sum_{j = 1}^{n} (2^{-\ell}|y|_G)^{\lambda_j}\Big(\sum_{\substack{k \in \Z \\ C^{-1}2^{-\ell}|y|_G \leq 2^k}} 2^{-k\lambda_j} \Big) 
   & \lesssim 1.
\end{align*}
This is a strong version of the desired inequality \eqref{c1sumikl}.

\smallskip

\noindent \textit{Case 2: $\ell < 0$.} In view of \eqref{c1eq: Ikl vanish}, we may split the sum
\begin{equation*}
\sum_{k \in \Z} I_{\ell}^k(y) = 
\sum_{\substack{k \in \Z \\ |y|_G \leq C 2^k}} I_{\ell}^k(y) =  \sum_{k \in \mathcal{A}_{\ell}(y)} I_{\ell}^k(y) +  \sum_{k \in \mathcal{B}_{\ell}(y)} I_{\ell}^k(y)
\end{equation*}
where
\begin{align*}
    \mathcal{A}_{\ell}(y) &:= \{k \in \Z : 2^{-k}|y|_G \leq C2^{\ell}\}, \\
    \mathcal{B}_{\ell}(y) &:= \{k \in \Z : C2^{\ell} < 2^{-k}|y|_G \leq C\}.
\end{align*}
We apply \eqref{c1eq: non_triv_est} to each of the terms in the right-hand sums. For large values of $k$, this leads to the estimate  
\begin{equation*}
    \sum_{k \in \mathcal{A}_{\ell}(y)} I_{\ell}^k(y) \lesssim_{\sigma} \sum_{j = 1}^{n} (2^{-\ell}|y|_G)^{\lambda_j}\Big(\sum_{\substack{k \in \Z \\ \ C^{-1}2^{-\ell}|y|_G \leq 2^k}} 2^{-k\lambda_j} \Big) \lesssim 1.
\end{equation*}
On the other hand, for the small values of $k$, we have
\begin{equation*}
    \sum_{k \in \mathcal{B}_{\ell}(y)} I_{\ell}^k(y) \lesssim_{\sigma} \#\mathcal{B}_{\ell}(y) \lesssim |\ell|. 
\end{equation*}
Thus, we again have the desired inequality \eqref{c1sumikl}.\smallskip

In either case, we obtain \eqref{c1sumikl}, which completes the proof of Lemma~\ref{lem: weak type 11}.
\end{proof}

%%%%%%%%%%%%%%%%%%%%%%%%%%%%%%%%%%%%%%%%%%%%%%%%%%%%%%%%%%%%%%%%%%%%%%%%%%%%%%%%%%%%%%%%%%%%%%%%

%    A mean value estimate

%%%%%%%%%%%%%%%%%%%%%%%%%%%%%%%%%%%%%%%%%%%%%%%%%%%%%%%%%%%%%%%%%%%%%%%%%%%%%%%%%%%%%%%%%%%%%%%%

\subsection{A mean value estimate} It remains to provide the details of the proof of Lemma~\ref{lem: mean value}.

\begin{proof}[Proof (of Lemma~\ref{lem: mean value})] First consider an element of the form $z_j = \exp(\pm t_jX_j)$ for some $1 \leq j \leq n$ and $t_j > 0$. By the right-invariance property \eqref{eq: right-invariant}, we have
\begin{equation*}
 |g(z_j\bdot x) - g(x)|   = \Big|\int_0^{t_j} \frac{\partial}{\partial s} g(\exp( \pm s X_j) \bdot  x) \,\ud s\Big|  = \Big|\int_0^{t_j} (X_j^Rg)(\exp(\pm s X_j) \bdot x) \,\ud s\Big|.
\end{equation*}
Therefore, by changing the order of integration and using the translation invariance of the Haar measure,
\begin{equation}\label{eq: MV 1}
    \int_G |g(z_j\bdot x) - g(x)|\,\ud x  \leq  |t_j| \|X_j^R g\|_{L^1(G)}. 
\end{equation}

Now consider an arbitrary element $z \in G \setminus \{0\}$. Since, as discussed in \S\ref{subsec: basic concepts}, the map
\begin{equation*}
\Upsilon(t_1, \dots, t_n) := \exp(t_nX_n) \bdot \dots \bdot \exp(t_1X_1)
\end{equation*}
is a global diffeomorphism, $z$ can be written uniquely as
\begin{equation*}
    z = z_n \bdot \dots \bdot z_1 \qquad \textrm{where} \qquad z_j := \exp(t_jX_j) 
\end{equation*} 
for some $t_1,\dots, t_n \in \R$. Define a sequence of group elements by taking $\zeta_0 := 0$ and
\begin{equation*}
    \zeta_j := z_j \bdot \dots \bdot z_1 \qquad \text{ for $1 \leq j \leq n$,}
\end{equation*}
so that $z = \zeta_n$. We therefore have
\begin{equation*}
    g(z \bdot x) - g(x) = \sum_{j = 1}^n \big(g(\zeta_j \bdot x) - g(\zeta_{j-1} \bdot x)\big) = \sum_{j = 1}^n \big(g(z_j \bdot x_j) - g(x_j)\big)
\end{equation*} 
where $x_j := \zeta_{j-1} \bdot x$. By repeated application of the inequality \eqref{eq: MV 1} from the case considered above, 
\begin{equation}\label{eq: MV 2}
    \int_{G}|g(z \cdot x) - g(x)|\ud x \leq \sum_{j = 1}^n  |t_j| \|X_j^R g\|_{L^1(G)} \lesssim \sum_{j = 1}^n  |z|_G^{\lambda_j} \|X_j^R g\|_{L^1(G)},
\end{equation}
as required. Note that the final step in \eqref{eq: MV 2} follows from the inequality
\begin{equation*}
    |z|_G^{-1}\sum_{j=1}^n |t_j|^{1/\lambda_j} \leq \sup \Big\{ \sum_{j = 1}^{n} |s_j|^{1/\lambda_j} : |\exp(s_nX_n)\bdot \dots \bdot \exp(s_1X_1)|_G \leq 1 \Big\} \lesssim 1,
\end{equation*}
which in turn is a simple consequence of the definition of the group dilations. 
\end{proof}

%%%%%%%%%%%%%%%%%%%%%%%%%%%%%%%%%%%%%%%%%%%%%%%%%%%%%%%%%%%%%%%%%%%%%%%%%%%%%%%%%%%%%%%%%%%%%%%%

%    Analysis of the examples

%%%%%%%%%%%%%%%%%%%%%%%%%%%%%%%%%%%%%%%%%%%%%%%%%%%%%%%%%%%%%%%%%%%%%%%%%%%%%%%%%%%%%%%%%%%%%%%%

\section{Analysis of the examples}\label{sec: analysis examples}

%%%%%%%%%%%%%%%%%%%%%%%%%%%%%%%%%%%%%%%%%%%%%%%%%%%%%%%%%%%%%%%%%%%%%%%%%%%%%%%%%%%%%%%%%%%%%%%%

%    Graded and stratified groups

%%%%%%%%%%%%%%%%%%%%%%%%%%%%%%%%%%%%%%%%%%%%%%%%%%%%%%%%%%%%%%%%%%%%%%%%%%%%%%%%%%%%%%%%%%%%%%%%

\subsection{Graded and stratified groups}\label{subsec: graded and stratified groups} We investigate example applications of Theorem~\ref{thm: main} in the setting of graded and stratified groups; the relevant definitions are recalled presently (see \cite[Chapter 1]{FS_book} for further details). 

A Lie algebra $\fg$ with Lie bracket $[\,\cdot\,,\,\cdot\,]_{\fg}$ is \textit{graded} if there exists a vector space decomposition
\begin{equation}\label{eq: graded}
    \fg = \bigoplus_{j=1}^{\infty} V_j \qquad \textrm{where} \qquad [V_i, V_j]_{\fg} \subseteq V_{i+j} \quad \textrm{for $i, j \geq 1$;}
\end{equation}
here all but finitely many of the vector spaces $V_j$ are equal to $\{0\}$. If $G$ is a connected, simply connected nilpotent Lie group with graded Lie algebra $\fg$, then there is automatically a natural dilation structure on $\fg$ induced by the grading and so $G$ is a homogeneous group.

A Lie algebra $\fg$ is \textit{stratified} if it is graded and $V_1$ generates $\fg$ as an algebra; in this case, if $\fg$ is nilpotent of step $m$, then
\begin{equation}\label{eq: stratified}
    \fg = \bigoplus_{j=1}^m V_j \qquad \textrm{where} \qquad [V_1, V_j]_{\fg} = V_{j+1} \quad \textrm{for $1 \leq j \leq m$.}
\end{equation}
We say a Lie group $G$ is a \textit{graded} (respectively, \textit{stratified}) \textit{group} if it a homogeneous group such that the Lie algebra $\fg$ is graded (respectively, stratified).

We can relate the Lie bracket on $\fg$ to the group theoretic commutator on $G$ via the Baker--Campbell--Hausdorff formula. In particular, for $x$, $y \in G$ define 
\begin{equation*}
    [x, y]_G := x\bdot y \bdot x^{-1} \bdot y^{-1}. 
\end{equation*}
Then, by the Baker--Campbell--Hausdorff formula,
\begin{equation}\label{eq: BCH}
    [\exp(X), \exp(Y)]_G = \exp\big([X, Y]_{\fg} +  e_3(X,Y)\big)
\end{equation}
for all $X,Y \in \fg$, where $e_3(X,Y)$ is a linear combination of lie brackets of $X$ and $Y$ of order at least 3.

%%%%%%%%%%%%%%%%%%%%%%%%%%%%%%%%%%%%%%%%%%%%%%%%%%%%%%%%%%%%%%%%%%%%%%%%%%%%%%%%%%%%%%%%%%%%%%%%

%    Testing conditions for analytic submanifolds

%%%%%%%%%%%%%%%%%%%%%%%%%%%%%%%%%%%%%%%%%%%%%%%%%%%%%%%%%%%%%%%%%%%%%%%%%%%%%%%%%%%%%%%%%%%%%%%%

\subsection{Testing conditions for analytic submanifolds}%\label{subsec: testing}

    Let $G$ be a homogeneous group and $S$ be a smooth submanifold $G$. We say a Borel measure $\sigma$ on $G$ is a \textit{$C_c^{\infty}$-density on $S$} if it is of the form $\eta \ud \sigma_S$ where $\eta \in C^{\infty}_c(S)$ is a smooth, compactly supported function on $S$ and $\sigma_S$ is the natural surface measure on $S$ induced by the Haar measure on $G$.

We consider testing conditions which ensures any $C^{\infty}_c$-density on $S$ satisfies $(\mathrm{CA})$. For analytic submanifolds, a very simple sufficient condition is provided by Ricci--Stein~\cite{RS1988}. 

\begin{proposition}[Corollary 2.3, \cite{RS1988}]\label{prop: RS analytic} Let $S$ be a connected analytic submanifold of a homogeneous group $G$. If $S$ generates the group $G$, then any $C^{\infty}_c$-density $\sigma$ on $S$ satisfies $(\mathrm{CA})$. 
\end{proposition}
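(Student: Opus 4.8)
The goal is to establish that a $C_c^\infty$-density $\sigma$ on a connected analytic submanifold $S$ generating $G$ satisfies the curvature assumption (CA). The key observation is that (CA) is a condition on the convolution products $\sigma^{(N)}$: we need some $\sigma^{(N)}$ to be absolutely continuous with a Radon–Nikodym derivative enjoying $L^1$-Hölder continuity. I would therefore approach this by proving that for $N$ large enough the measure $\sigma^{(N)}$ becomes absolutely continuous with a $C_c^\infty$ (or at least Lipschitz) density. Once absolute continuity with a smooth compactly supported density $h$ is obtained, part ii) of (CA) follows immediately: for $h \in C_c^\infty(G)$ one has, by the mean value estimate of Lemma~\ref{lem: mean value} (or a direct argument), that $\int_G |h(y^{-1}\bdot x) - h(x)|\,\ud x \lesssim \sum_j |y|_G^{\lambda_j}\|X_j^R h\|_{L^1}$, and similarly for right translations, giving the required $|y|_G^\gamma$ bound with $\gamma = \lambda_1 = \min_j \lambda_j$ (at least for $|y|_G \leq 1$; for $|y|_G \geq 1$ the bound is trivial since $\|h\|_{L^1}$ is finite).

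\textbf{The heart of the matter: absolute continuity.} The real content is showing some $\sigma^{(N)}$ is absolutely continuous. The intuition is that convolving a measure concentrated on a submanifold with itself repeatedly "thickens" the support; since $S$ generates $G$, iterated group products of points of $S$ should sweep out a full-dimensional region. I would make this quantitative by analysing the smooth map
\begin{equation*}
    \Phi_N \colon S^{N+1} \to G, \qquad \Phi_N(s_0, \dots, s_N) := s_0 \bdot s_1^{\pm 1} \bdot \cdots \bdot s_N^{\pm 1},
\end{equation*}
with the signs dictated by the recursion \eqref{eq: conv prod}. The pushforward of the smooth product density on $S^{N+1}$ under $\Phi_N$ is precisely $\sigma^{(N)}$. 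By the coarea/submersion principle, $\sigma^{(N)}$ is absolutely continuous with a smooth density on the open set where $\Phi_N$ is a submersion, i.e.\ where the differential $d\Phi_N$ has full rank $n = \dim G$. So the task reduces to proving that, for $N$ sufficiently large, $d\Phi_N$ has full rank at some point.

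\textbf{Exploiting analyticity and the generating hypothesis.} The full-rank condition is where analyticity is essential. The image of $d\Phi_N$ at a point is spanned by right-translates of tangent vectors to $S$ together with vectors obtained from the group structure (commutators); because $S$ generates $G$, the Lie algebra generated by the tangent directions of $S$ together with their brackets exhausts $\fg$, via the Baker–Campbell–Hausdorff relationship \eqref{eq: BCH} between group commutators and Lie brackets. I would argue that if $d\Phi_N$ failed to have full rank for every $N$ and at every point, then by analyticity the image of $S$ would lie in a proper analytic subgroup (the orbit of the distribution spanned by the accessible directions), contradicting that $S$ generates $G$; here the connectedness of $S$ and the Hermann–Nagano / Chow-type accessibility theory for analytic distributions is the clean tool. \emph{The main obstacle} is precisely this rank computation: tracking how tangent directions to $S$ propagate through successive group multiplications and verifying that the generating hypothesis forces full rank after finitely many steps. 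Since this is exactly the content of Corollary~2.3 of \cite{RS1988}, I would ultimately either invoke that computation directly or reproduce the analytic-accessibility argument, and then combine it with the coarea-pushforward reduction and the Hölder estimate for smooth densities to close the proof.
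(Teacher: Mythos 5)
Your overall architecture actually matches the paper's treatment of this statement: the paper does not reprove the proposition but cites it, explaining that the generating hypothesis yields (via \cite[Proposition 1.1]{RS1988}) some $N$ for which the alternating product set $S \bdot \tilde{S} \bdot S \bdot \cdots$ contains a nonempty open subset of $G$, after which \cite[Corollary 2.3]{RS1988} applies. Your reduction to the product map $\Phi_N$ on $S^{N+1}$, and your Nagano/accessibility-type argument that analyticity plus the generating hypothesis force $d\Phi_N$ to attain full rank somewhere for large $N$, correspond to that first step and are sound in outline.

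There is, however, a genuine gap in the regularity step, and it is exactly where you mislocate the difficulty. You claim that once $\Phi_N$ is a submersion at some point, $\sigma^{(N)}$ is absolutely continuous with a $C_c^\infty$ (or Lipschitz) density, from which (CA)(ii) ``follows immediately'' via the mean value estimate. The coarea/submersion argument does give absolute continuity (the critical variety $Z \subset S^{N+1}$ is a proper analytic subvariety, hence null for the product density, so only the submersive part contributes), but the resulting density $h(x) = \int_{\Phi_N^{-1}(x)} \eta / J\Phi_N$ is smooth only off $\Phi_N(Z)$ and in general blows up as $x$ approaches $\Phi_N(Z)$, since $J\Phi_N$ vanishes on $Z$; already the two-fold convolution of arclength measure on a circle in $\R^2$ has an unbounded density. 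So the hypothesis ``$h \in C_c^\infty$'' on which your deduction of (CA)(ii) rests is simply not available, and no finite number of convolutions is guaranteed to produce it by this mechanism. The substantive content of \cite[Corollary 2.3]{RS1988} is precisely the quantitative $L^1$-H\"older (i.e. $\Lambda_\gamma(L^1)$) estimate for this singular density, proved there by a nontrivial induction using sublevel-set estimates for analytic functions; it is not the rank computation, which is their Proposition 1.1. Your closing sentence therefore invokes Corollary 2.3 for the wrong step, and as written the proof of (CA)(ii) --- the analytic heart of the proposition --- is missing.
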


Here we say a set \textit{$S \subseteq G$ generates $G$} if $G = \langle S \rangle$ where 
\begin{equation}\label{eq: subgroup gen}
    \langle S \rangle := \{ s_1 \bdot \dots \bdot s_N : s_1, \dots, s_N \in S \cup \tilde{S} \}
\end{equation}
for $\tilde{S} := \{s^{-1} : s \in S\}$. Ricci--Stein~\cite{RS1988} work with the ostensibly weaker condition that $G = \mathrm{clos}(\langle S \rangle)$; however, in all cases we consider (that is, for $S$ a connected analytic submanifold) these conditions turn out to be equivalent.

We remark that the result in \cite[Corollary 2.3]{RS1988} is in fact somewhat more general. There, the authors consider a family of connected analytic submanifolds $S_j$ for $1 \leq j \leq N$ such that the iterated product set $S_1 \bdot \dots \bdot S_N$ contains a non-trivial open subset of $G$.  For each $j$, one fixes $\sigma_j$ a smooth density on $S_j$ and considers the convolution product $\sigma_1 \ast \cdots \ast \sigma_N$. To recover Proposition~\ref{prop: RS analytic}, we choose the $S_j$ to alternate between $S$ and the reflection $\tilde{S}$ and, accordingly, the  $\sigma_j$ to alternate between $\sigma$ and $\tilde{\sigma}$. Using \cite[Proposition 1.1]{RS1988}, the hypothesis that $S$ generates $G$ implies the existence of some $N$ such that $S_1 \bdot \dots \bdot S_N$ contains a non-trivial open subset of $G$, and so \cite[Corollary 2.3]{RS1988} applies.\footnote{Alternatively, if $e \in S$ (which we may always assume in applications to maximal functions), then the condition that $S$ generates $G$ is equivalent to 
\begin{equation*}
    G = \bigcup_{N=1}^{\infty} S_1 \bdot \dots \bdot S_N
\end{equation*}
for $S_j$ as defined above. It is then easy to show there exists some $N \in \N$ such that $S_1 \bdot \dots \bdot S_N$ contains a non-trivial open set using the Baire category theorem.}

%%%%%%%%%%%%%%%%%%%%%%%%%%%%%%%%%%%%%%%%%%%%%%%%%%%%%%%%%%%%%%%%%%%%%%%%%%%%%%%%%%%%%%%%%%%%%%%%

%    The Kor\'anyi sphere and extensions

%%%%%%%%%%%%%%%%%%%%%%%%%%%%%%%%%%%%%%%%%%%%%%%%%%%%%%%%%%%%%%%%%%%%%%%%%%%%%%%%%%%%%%%%%%%%%%%%

\subsection{The Kor\'anyi sphere and extensions}\label{subsec: Koranyi}

We return to the example of the Kor\'anyi sphere and its extensions, as discussed in \S\ref{subsec: examples}, 2). Here we work in the setting of a graded Lie group $G$ with $\dim G \geq 2$. Using Proposition~\ref{prop: RS analytic}, we verify $(\mathrm{CA})$ for a large class of measures which, in exponential coordinates, are supported on boundaries of convex domains. 

\begin{lemma}\label{lem: convex ex} Let $G$ be a graded Lie group with $\dim G \geq 2$ and suppose $\Omega$ is an open convex domain in $\fg$ with analytic boundary $\Sigma := \partial \Omega$. Then any $C_c^{\infty}$-density on $\exp(\Sigma)$ satisfies $(\mathrm{CA})$. 
\end{lemma}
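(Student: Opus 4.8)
The plan is to deduce the result directly from Proposition~\ref{prop: RS analytic}, applied to the submanifold $S := \exp(\Sigma)$. This reduces everything to two claims: that $S$ is a connected analytic submanifold of $G$, and that $S$ generates $G$ in the sense of \eqref{eq: subgroup gen}. The first claim is routine. Writing $n := \dim G = \dim \fg \geq 2$, the boundary $\Sigma = \partial\Omega$ of the bounded convex domain $\Omega$ is a connected analytic hypersurface in $\fg$, and since $\exp \colon \fg \to G$ is an analytic diffeomorphism, $S$ is a connected analytic submanifold of dimension $n-1$. I would also record here the single geometric input used below: $\Sigma$ is \emph{not contained in any proper affine subspace of $\fg$}, which holds because the boundary of a bounded convex domain affinely spans the ambient space.

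The substance of the argument, and the step I expect to be the main obstacle, is verifying that $S$ generates $G$. By the equivalence noted after Proposition~\ref{prop: RS analytic} (valid for connected analytic $S$), it is enough to show $H := \mathrm{clos}\langle S \rangle = G$, and I would argue by contradiction, assuming $H \neq G$. By Cartan's closed subgroup theorem $H$ is a Lie subgroup; let $H_0$ be its identity component and $\mathfrak h := \mathrm{Lie}(H_0) \subseteq \fg$. Since $G$ is simply connected and nilpotent, a standard fact is that $H_0$ is a closed, simply connected subgroup onto which $\exp$ restricts to a diffeomorphism; in the exponential coordinates identifying $G$ with $\fg$, this means $H_0$ coincides with the linear subspace $\mathfrak h$. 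Moreover $H \neq G$ forces $H_0 \neq G$ (as $H_0 = G$ would give $H = G$), so $\mathfrak h$ is a proper subalgebra.

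Next I would locate $S$ inside a coset of $H_0$. As $S$ is connected and contained in $H$, it lies in a single coset of the open subgroup $H_0$; fixing $X_0 \in \Sigma$ and setting $s_0 := \exp(X_0) \in S$, that coset must be $s_0 \bdot H_0$, whence $\exp(\Sigma) \subseteq s_0 \bdot \exp(\mathfrak h)$. Since the left-hand side is an $(n-1)$-dimensional submanifold contained in the coset $s_0 \bdot \exp(\mathfrak h)$ of dimension $\dim \mathfrak h$, we must have $\dim \mathfrak h \geq n-1$; together with properness this forces $\dim \mathfrak h = n-1$. A codimension-one subalgebra of a nilpotent Lie algebra is an ideal containing $[\fg,\fg]_{\fg}$: indeed, a proper subalgebra of a nilpotent Lie algebra is strictly contained in its normaliser, so the normaliser of $\mathfrak h$ is all of $\fg$ and $\fg/\mathfrak h$ is abelian. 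Writing $\mathfrak h = \ker \lambda$ for a linear functional $\lambda$ on $\fg$, we conclude $\lambda|_{[\fg,\fg]_{\fg}} = 0$.

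The final step is to identify the coset $s_0 \bdot \exp(\mathfrak h)$ as a genuine affine hyperplane, which is precisely where the obstruction caused by the higher-order terms of the group law dissolves. Writing the group law via Baker--Campbell--Hausdorff as $\exp(X) \bdot \exp(Y) = \exp\big(X + Y + r(X,Y)\big)$, where $r(X,Y)$ is a finite sum of iterated Lie brackets and hence lies in $[\fg,\fg]_{\fg}$, any $Z$ in the coset has the form $Z = X_0 + Y + r(X_0,Y)$ with $Y \in \mathfrak h$; since $\lambda$ annihilates both $Y$ and all brackets, $\lambda(Z) = \lambda(X_0)$, and conversely every such $Z$ lies in the coset because $s_0^{-1} \bdot Z \in \ker \lambda = \mathfrak h$. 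Thus, in exponential coordinates, $s_0 \bdot \exp(\mathfrak h)$ is exactly the affine hyperplane $\{Z \in \fg : \lambda(Z) = \lambda(X_0)\}$, so $\Sigma$ would be contained in a proper affine subspace of $\fg$ --- contradicting the non-degeneracy recorded in the first paragraph. Hence $S$ generates $G$, and Proposition~\ref{prop: RS analytic} yields the lemma.
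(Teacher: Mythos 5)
Your argument is correct, but it takes a genuinely different route from the paper's. The paper proves the generation hypothesis directly and constructively: it shows that the two-fold product $\exp(\Sigma)\bdot\exp(\Sigma)$ already contains an open ball. The two ingredients there are (i) the observation that for every $X\in\fg$ the kernel of $Y\mapsto [X,Y]_{\fg}$ has dimension at least $2$ (this is where the grading and $\dim G\geq 2$ enter, via rank--nullity), and (ii) an antipodal intermediate-value argument on the circle cut out of a $2$-plane in that kernel, which for each $x=\exp(X)$ near the origin produces an element $w_x$ commuting with $x$ such that both $x\bdot w_x$ and $w_x^{-1}\bdot x$ lie on $\exp(\Sigma)$, whence $x\bdot x\in\exp(\Sigma)\bdot\exp(\Sigma)$. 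Your proof is contrapositive and structural: failure to generate traps the connected set $S$ in a coset of the identity component of a proper closed subgroup; the dimension count forces that subgroup to have codimension one; a codimension-one subalgebra of a nilpotent Lie algebra is an ideal containing $[\fg,\fg]_{\fg}$; hence the coset is an affine hyperplane in exponential coordinates, contradicting the affine non-degeneracy of the boundary of a bounded convex body. All the steps check out (closedness and simple connectedness of $H_0$, the tangent-space dimension comparison, the normaliser-grows argument, and the Baker--Campbell--Hausdorff computation identifying the coset with $\{\lambda=\lambda(X_0)\}$). Each route buys something: the paper's argument pins down that two-fold products suffice to cover an open set, and its kernel-dimension claim is the reusable tool; yours uses convexity only through the fact that $\Sigma$ affinely spans $\fg$, and uses nilpotency and simple connectedness rather than the grading, so it isolates the relevant geometric feature more cleanly --- at the price of being tied to the hypersurface (codimension-one) case and of invoking the equivalence, asserted but not proved in the paper, between $S$ generating $G$ and $\mathrm{clos}\langle S\rangle = G$ (though you only need the direction that Ricci--Stein themselves use). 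Finally, note that both proofs tacitly require $\Omega$ to be bounded --- yours for the affine spanning of $\partial\Omega$, the paper's for the existence of the exit parameters $t(W),s(W)>0$ --- which the lemma statement omits but the surrounding discussion clearly intends.
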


Lemma~\ref{lem: convex ex} applies to the Kor\'anyi sphere in the Heisenberg group and therefore, in view of Theorem~\ref{thm: main}, we obtain a significant extension of~\cite[Theorem 1.2]{GT2021}.

\begin{proof}[Proof (of Lemma~\ref{lem: convex ex})] By Proposition~\ref{prop: RS analytic}, it suffices to show that $\exp(\Sigma) \bdot \exp(\Sigma)$ contains an open ball in $G$ (defined with respect to, say, the homogeneous norm). Indeed, if this is the case, then $\exp(\Sigma)$ must generate an open ball $B$ around the origin. Using the Baker--Campbell--Hausdorff formula, given $x \in G$, there exists some $y \in B$ and $N \in \N$ such that $x = y^N = y \bdot \dots \bdot y$, and so $\exp(\Sigma)$ generates $G$. 

Let $\fg$ be the Lie algebra associated to $G$, which we assume admits a grading as in \eqref{eq: graded}. For $X \in \fg$, define the linear map 
\begin{equation*}
    \Phi_{X} \colon \fg \to \fg, \qquad \Phi_X(Y) \colon Y \mapsto [X,Y]_{\fg}.
\end{equation*}   
The key claim is that for any $X \in \fg$, the kernel $\ker \Phi_X$ has dimension at least 2.

Temporarily assuming the claim, we argue as follows. Assume, without loss of generality, that $B_G(0,1) : = \{x \in G : |x|_G < 1\}\subseteq \exp(\Omega)$. Choose $x \in B_G(0,1)$ and let $X := \exp^{-1}(x)$. Let $H_X$ be a subspace of $\ker \Phi_X$ of dimension $2$ and let $S^{n-1}$ denotes the unit sphere in $\fg$ with respect to the euclidean norm. By the convexity of $\Omega$, for each $W \in S^{n-1} \cap H_X$ there exist unique real numbers $t(W)$, $s(W) >0$ such that 
\begin{equation*}
    X + t(W)W, \quad X - s(W)W \in \Sigma.
\end{equation*}
where $t(W)W$ and $s(W)W$ are the usual scalar multiples of $W$. Furthermore, the mapping
\begin{equation*}
 F \colon S^{n-1} \cap H_X \rightarrow \R, \qquad  F \colon W \mapsto t(W) - s(W)
\end{equation*}
is continuous. Clearly, $t(-W) = s(W)$ and $s(-W) = t(W)$, so that $F(-W) = -F(W)$. The set $S^{n-1} \cap H_X$ is a $1$-dimensional (euclidean) sphere. By the intermediate value theorem, there exists some $W_x \in S^{n-1} \cap H_X$ such that $F(W_x) = 0$ or, equivalently, $t(W_x) = s(W_x)$. Since $W_x \in  H_X$, by the Baker--Campbell--Hausdorff formula and bilinearity of the Lie bracket, 
\begin{equation*}
    x \bdot x = x \bdot w_x \bdot w_x^{-1} \bdot x \in \exp(\Sigma) \bdot \exp(\Sigma), \qquad \textrm{where $w_x := \exp(t(W_x)W_x)$.}
\end{equation*}
As $x \in B_G(0,1)$ was chosen arbitrarily, we conclude that $\exp(\Sigma) \bdot \exp(\Sigma)$ contains an open ball in $G$.

It remains to verify the claim that $\dim \ker \Phi_X \geq 2$ for all $X \in \fg$. From the definition of the grading, the image of $\Phi_X$ is always contained in $\bigoplus_{j \ge 2} V_j$. Thus, if $\dim(V_1) \geq 2$, then the result immediately follows from the rank--nullity theorem. On the other hand, if $\dim(V_1) = 1$, then we must have $[V_1, V_1] = \{0 \}$. Therefore, the image $\Phi_X$ is contained in  $\bigoplus_{j \ge 3} V_j$ and we can again apply rank-nullity to deduce the desired result.
\end{proof}

%%%%%%%%%%%%%%%%%%%%%%%%%%%%%%%%%%%%%%%%%%%%%%%%%%%%%%%%%%%%%%%%%%%%%%%%%%%%%%%%%%%%%%%%%%%%%%%%

%    Horizontal spheres and extensions

%%%%%%%%%%%%%%%%%%%%%%%%%%%%%%%%%%%%%%%%%%%%%%%%%%%%%%%%%%%%%%%%%%%%%%%%%%%%%%%%%%%%%%%%%%%%%%%%

\subsection{Horizontal spheres and extensions}\label{subsec: horizontal} We return to the example of the horizontal spheres and the various extensions discussed in \S\ref{subsec: examples}, 3) and 4). Here it is natural to work with a stratified Lie group $G$, so that the Lie algebra $\fg$ admits a stratification as in \eqref{eq: stratified}. Let $\Pi_1 \colon \fg \to V_1$ denote the subspace projection onto $V_1$. The main result is as follows.

\begin{lemma}\label{lem: projection example}
Let $G$ be a stratified Lie group with $\dim G \geq 2$ and $\Sigma \subseteq \fg$ be a connected analytic submanifold such that $\Pi_1(\Sigma)$ generates $V_1$ (in terms of vector addition). Any $C_c^{\infty}$-density on $\exp(\Sigma)$ satisfies $(\mathrm{CA})$.
\end{lemma}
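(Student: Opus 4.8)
The plan is to invoke Proposition~\ref{prop: RS analytic}, which reduces the verification of $(\mathrm{CA})$ to showing that $\exp(\Sigma)$ generates $G$ as a group. Since $G$ is stratified with $\fg = \bigoplus_{j=1}^m V_j$ and $V_1$ generates $\fg$ as a Lie algebra, the crucial point is to pass from the hypothesis that $\Pi_1(\Sigma)$ generates $V_1$ (under vector addition) to the conclusion that $\exp(\Sigma)$ generates all of $G$. First I would establish that the group generated by $\exp(\Sigma)$ contains $\exp(V_1)$, i.e. the full first layer. The natural mechanism is to exploit the stratification: because $V_1$ generates $\fg$ as a Lie algebra, iterated group commutators of elements of $\exp(V_1)$ will, via the Baker--Campbell--Hausdorff formula \eqref{eq: BCH}, produce elements whose leading-order terms sweep out the higher strata $V_2, \dots, V_m$.

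\textbf{Key steps.} I would proceed as follows. The first step is to show that the subgroup $\langle \exp(\Sigma) \rangle$ contains a neighbourhood of the origin inside $\exp(V_1)$. Here the idea is that for any point $p \in \Sigma$, translating by $\exp(-\Pi_1(p))$-type corrections is not directly available, so instead one argues infinitesimally: differences of nearby points of $\Sigma$, combined through the group law, yield directions in $V_1$. More precisely, since $\Pi_1(\Sigma)$ is not contained in any proper affine subspace of $V_1$, the image $\Pi_1(\Sigma)$ spans $V_1$ affinely, and by taking group-theoretic products and inverses of elements of $\exp(\Sigma)$ one can cancel the higher-order contributions modulo $\bigoplus_{j \ge 2} V_j$ and realise a full-dimensional subset of $\exp(V_1)$ in $\langle \exp(\Sigma) \rangle$. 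The second step is to upgrade from $\exp(V_1)$ to all of $G$: since $V_1$ generates $\fg$, the identity \eqref{eq: BCH} shows that group commutators of elements of $\exp(V_1)$ generate $\exp(V_2)$ modulo higher strata, and iterating up the stratification (a standard descending induction on the step) shows $\langle \exp(V_1) \rangle = G$. Combining the two steps gives $\langle \exp(\Sigma)\rangle = G$, and Proposition~\ref{prop: RS analytic} then delivers $(\mathrm{CA})$.

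\textbf{Main obstacle.} The delicate part is the first step, namely extracting the \emph{full} first layer $\exp(V_1)$ from $\exp(\Sigma)$ when $\Sigma$ may be a low-dimensional analytic submanifold whose image $\Pi_1(\Sigma)$ merely generates $V_1$ affinely (rather than containing an open set). The issue is that elements of $\exp(\Sigma)$ carry uncontrolled components in the higher strata $\bigoplus_{j\ge 2}V_j$, and one must argue that these can be systematically cancelled when forming group products. I expect the cleanest route is to observe that $\langle \exp(\Sigma)\rangle$ is generated by a connected analytic set containing the identity (after a harmless normalisation, e.g. translating so that some base point maps to $e$, or working with the reflection-symmetric generating set in \eqref{eq: subgroup gen}), and then to appeal to the same mechanism underlying Proposition~\ref{prop: RS analytic}: the connected subgroup generated by $\exp(\Sigma)$ corresponds to a Lie subalgebra $\fh \subseteq \fg$, and the hypothesis that $\Pi_1(\Sigma)$ generates $V_1$ forces $V_1 \subseteq \fh$, whence $\fh \supseteq \langle V_1 \rangle_{\mathrm{Lie}} = \fg$. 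This Lie-theoretic reformulation sidesteps the explicit commutator bookkeeping and is where the connectedness and analyticity of $\Sigma$, together with the generation hypothesis on $\Pi_1(\Sigma)$, are genuinely used.
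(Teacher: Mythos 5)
Your overall reduction is the same as the paper's: invoke Proposition~\ref{prop: RS analytic} and show that $\exp(\Sigma)$ generates $G$. The gap is in how you propose to show generation. Your Step 1 asks for the \emph{full first layer} $\exp(V_1)$ inside $\langle\exp(\Sigma)\rangle$ as a preliminary step, obtained by ``cancelling the higher-order contributions modulo $\bigoplus_{j\ge2}V_j$''. Cancelling modulo the higher strata only produces elements of $\langle\exp(\Sigma)\rangle$ with a prescribed $V_1$-projection and \emph{uncontrolled} components in $V_2,\dots,V_m$; it does not place anything in $\exp(V_1)$ itself. To cancel those components exactly you must already know that $\exp\big(\bigoplus_{j\ge\ell}V_j\big)\subseteq\langle\exp(\Sigma)\rangle$ for $\ell\ge2$, which is precisely why the paper's induction (Lemma~\ref{lem: gen test}) runs \emph{downward} from the top stratum: one first gets elements $\bg(x)$ with prescribed $V_1$-projection and garbage above, then uses iterated commutators $\phi_{\ell}$ (which kill all strata below level $\ell$ and hit $V_\ell$ exactly, by \eqref{eq: tildePhi property 1}--\eqref{eq: tildePhi property 2}) to capture $G_m, G_{m-1},\dots$ in turn, correcting the higher-order garbage with the induction hypothesis at each stage. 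The order of your two steps is therefore essentially reversed relative to what can actually be proved; Step 1 as stated is as hard as the whole lemma.

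Your Lie-theoretic fallback is closer to a correct alternative but has its own gap: the hypothesis does \emph{not} directly force $V_1\subseteq\fh$. What it gives (granting, via Yamabe's theorem, that $H=\langle\exp(\Sigma)\rangle$ is a connected Lie subgroup $\exp(\fh)$ with $\Sigma\subseteq\fh$) is only $\Pi_1(\fh)=V_1$, since $\Pi_1(\fh)$ is a subspace containing an additive generating set of $V_1$. A subalgebra with $\Pi_1(\fh)=V_1$ need not obviously contain $V_1$; the implication $\Pi_1(\fh)=V_1\Rightarrow\fh=\fg$ is true for stratified $\fg$, but its proof is exactly the downward induction on strata again (lift $X_i\in V_1$ to $\tilde X_i\in\fh$, note $\Phi_{\ell}(\tilde X_1,\dots,\tilde X_{\ell})\equiv\Phi_{\ell}(X_1,\dots,X_{\ell})$ modulo $\bigoplus_{j>\ell}V_j$, and work down from $V_m$). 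So the reformulation relocates the commutator bookkeeping rather than sidestepping it, and it additionally imports Yamabe's theorem. Note also that the paper's generator test holds for \emph{arbitrary} subsets $S\subseteq G$; connectedness and analyticity of $\Sigma$ are needed only for Proposition~\ref{prop: RS analytic}, not for the generation step where your argument invokes them.
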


Clearly Lemma~\ref{lem: projection example} applies to the horizontal and tilted sphere examples in Heisenberg (and M\'etivier) groups and therefore, combined with Theorem~\ref{thm: main}, we obtain a significant extension of 
\cite[Theorem 1.1]{BHRT2021} (and also $L^p$ boundedness results mentioned in passing in \cite{RSS2022}). Of course, Lemma~\ref{lem: projection example} has a much broader scope, and provides a rich class of examples of arbitrary dimension which need not be associated to any $d$-plane distribution in the group. 

In view of Proposition~\ref{prop: RS analytic}, the proof of Lemma~\ref{lem: projection example} is reduced to showing the following lemma. 

\begin{lemma}[Generator test]\label{lem: gen test} Let $G$ be a stratified Lie group and $S \subseteq G$. Then $S$ generates $G$ if and only if $\Pi_1\circ \exp^{-1}(S)$ generates $V_1$ (in terms of vector addition).
\end{lemma}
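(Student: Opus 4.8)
The plan is to reduce the entire equivalence to one structural observation: the map $q := \Pi_1 \circ \exp^{-1} \colon G \to (V_1,+)$ is a \emph{surjective group homomorphism} onto the abelian group $(V_1,+)$. To see this, take $x = \exp(X)$ and $y = \exp(Y)$; the Baker--Campbell--Hausdorff formula gives $x \bdot y = \exp(X + Y + R)$, where $R$ is a finite sum of iterated Lie brackets of $X$ and $Y$. By the grading, each such bracket lies in $\bigoplus_{j \geq 2} V_j$, so $\Pi_1(R) = 0$ and hence $q(x \bdot y) = \Pi_1(X) + \Pi_1(Y) = q(x) + q(y)$. Surjectivity is immediate since $\exp$ is a bijection and $\Pi_1$ is onto. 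I would also record that $q(x^{-1}) = -q(x)$, so that $q(\tilde{S}) = -q(S)$ and $q(S \cup \tilde{S})$ generates the same additive subgroup of $V_1$ as $\Pi_1 \circ \exp^{-1}(S)$.

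With this in hand the forward implication is immediate: if $\langle S \rangle = G$, then applying $q$ yields $V_1 = q(G) = q(\langle S \rangle)$, and since $q$ is a homomorphism the right-hand side is precisely the additive subgroup of $V_1$ generated by $q(S \cup \tilde{S})$, that is, by $\Pi_1 \circ \exp^{-1}(S)$.

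The converse is the crux. Suppose $\Pi_1 \circ \exp^{-1}(S)$ generates $V_1$, equivalently $q(H) = V_1$ where $H := \langle S \rangle$. A one-line computation (for $g \in G$ pick $h \in H$ with $q(h) = q(g)$, so $h^{-1} \bdot g \in \ker q$) shows $H \bdot \ker q = G$. Two further ingredients finish the argument. First, I would identify $\ker q$ with the commutator subgroup: by construction $\ker q = \exp\big(\bigoplus_{j \geq 2} V_j\big)$, and since $G$ is stratified the lower central series of $G$ corresponds under $\exp$ to that of $\fg$, with $[\fg, \fg]_{\fg} = \bigoplus_{j \geq 2} V_j$ (using $[V_1, V_j]_{\fg} = V_{j+1}$); hence $\ker q = [G,G]$. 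This is a standard fact about stratified groups which I would cite from \cite[Chapter 1]{FS_book} rather than reprove. Second, I would establish a Frattini-type lemma for nilpotent groups: if $G$ is nilpotent and $H \leq G$ satisfies $H [G,G] = G$, then $H = G$. This goes by induction on the nilpotency class $c$: passing to $\bar{G} = G/\gamma_c(G)$, where $\gamma_c(G)$ is the last nontrivial (hence central) term of the lower central series, the hypothesis descends, so by induction $H \gamma_c(G) = G$; centrality of $\gamma_c(G)$ then gives $[G,G] = [H\gamma_c(G), H\gamma_c(G)] = [H,H] \leq H$, whence $\gamma_c(G) \leq [G,G] \leq H$ and therefore $G = H \gamma_c(G) = H$.

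Combining the two ingredients, $H [G,G] = H \bdot \ker q = G$ forces $H = G$, which completes the converse and hence the lemma. I expect the only genuinely nontrivial points to be the identification $\ker q = [G,G]$ and the nilpotent Frattini argument; the homomorphism property of $q$ and the forward direction are routine bookkeeping with the Baker--Campbell--Hausdorff formula.
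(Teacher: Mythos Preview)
Your proof is correct and takes a genuinely different route from the paper. Both arguments begin with the observation that $q := \Pi_1 \circ \exp^{-1}$ is a surjective homomorphism onto $(V_1,+)$ (this is the paper's \eqref{eq: pi homomorphism}), and both treat the forward implication as immediate. For the converse, however, the paper proceeds by an explicit descending induction on the stratification: it first constructs for each $x \in \exp(V_1)$ a lift $\bg(x) \in \langle S\rangle$ with $\pi_1(\bg(x)) = x$, and then shows by induction on $\ell$ (from $m+1$ down to $1$) that $G_\ell := \exp\big(\bigoplus_{i=\ell}^m V_i\big) \subseteq \langle S\rangle$, building the required elements at each step out of iterated group commutators $\phi_{\ell-1}(\bg(x_1),\dots,\bg(x_{\ell-1}))$ and correcting higher-order terms via the Baker--Campbell--Hausdorff formula. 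Your argument replaces this constructive recursion with pure group theory: the identification $\ker q = \exp\big(\bigoplus_{j\geq 2}V_j\big) = [G,G]$ together with a Frattini-type lemma for nilpotent groups ($H[G,G]=G \Rightarrow H=G$) finishes immediately. Your approach is cleaner and considerably shorter; the paper's approach, while longer, is more self-contained (it does not appeal to the identification of $[G,G]$ with $\exp([\fg,\fg]_{\fg})$) and has the feature of giving an explicit recipe for expressing any $g \in G$ as a word in $S \cup \tilde{S}$, which could in principle be mined for quantitative information about the length $N$ of such words.
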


Before presenting the proof, we introduce some helpful notation and consequences of the Baker--Campbell--Hausdorff formula \eqref{eq: BCH}.

Let $G$ be a graded Lie group with Lie algebra $\fg$. For $1 \leq \ell \leq m$, define the mapping 
\begin{equation*}
  \Phi_{\ell} \colon \fg^{\ell} \to \fg, \qquad \Phi_{\ell} \colon (X_1, \dots, X_{\ell}) \mapsto [X_1, [X_2, \dots , [X_{\ell-1}, X_{\ell}]_{\fg} \dots ]_{\fg}]_{\fg},
\end{equation*}
which takes a nested sequence of Lie brackets of $\ell$ algebra elements. Note that $\Phi_{\ell}$ maps the subspace $V_1^{\ell}$ into $V_{\ell}$. If $G$ is stratified, then this restricted mapping is a surjection. 

On the other hand, for $1 \leq \ell \leq m$, define the mapping 
\begin{equation*}
  \phi_{\ell} \colon G^{\ell} \to G, \qquad \phi_{\ell} \colon (x_1, \dots, x_{\ell}) \mapsto [x_1, [x_2, \dots , [x_{\ell-1}, x_{\ell}]_G \dots ]_G]_G,
\end{equation*}
which takes a nested sequence of commutators of $\ell$ group elements.
By iteratively applying the Baker--Campbell--Hausdorff formula \eqref{eq: BCH}, we have
\begin{equation}\label{eq: l-fold BCH}
\phi_{\ell}(\exp(X_1), \dots, \exp(X_{\ell})) = \exp\big(\Phi_{\ell}(X_1, \dots, X_{\ell}) + e_{\ell + 1}(X_1, \cdots, X_{\ell})\big),
\end{equation}
for all $X_1, \dots, X_{\ell} \in \fg$, where $e_{\ell + 1}(X_1, \cdots, X_{\ell})$ is a linear combination of Lie brackets of $X_1, \dots, X_{\ell}$ of order at least $\ell + 1$. 

Generalising the definition of $\Pi_1$ introduced above, for $j \in \N$, let $\Pi_j \colon \fg \to V_j$ denote the subspace projection onto $V_j$ and 
\begin{equation*}
    \pi_j := \exp \circ \,\Pi_j \circ \exp^{-1} \colon G \to G 
\end{equation*}
the corresponding map in the Lie group. We may then reinterpret \eqref{eq: l-fold BCH} as
\begin{equation}\label{eq: proj id}
    \pi_i \circ \phi_{\ell}(\exp(X_1), \dots, \exp(X_{\ell})) = \exp \big( \Pi_i \circ \Phi_{\ell}(X_1, \dots, X_{\ell})\big) \quad \textrm{for $1 \leq i \leq \ell$.}
\end{equation}
Furthermore, given any $X_1, \dots, X_{\ell} \in \fg$, as a consequence of \eqref{eq: graded}, we have 
\begin{equation}\label{eq: Phi property 1} 
    \Pi_{i} \circ \Phi_{\ell}(X_1, \dots, X_{\ell}) = 0 \qquad \textrm{for $1 \leq i \leq \ell - 1$}
\end{equation}
and
\begin{equation}\label{eq: Phi property 2} 
    \Pi_{\ell} \circ \Phi_{\ell}(X_1, \dots, X_{\ell}) = \Phi_{\ell}\big(\Pi_{1}(X_1), \dots, \Pi_{1}(X_{\ell})\big).
\end{equation}
On the other hand, given any $x_1, \dots, x_{\ell} \in G$, by combining \eqref{eq: proj id} with \eqref{eq: Phi property 1} and \eqref{eq: Phi property 2}, we have
\begin{equation}\label{eq: tildePhi property 1} 
    \pi_i \circ \phi_{\ell}(x_1, \dots, x_{\ell}) = e \qquad \textrm{for $1 \leq i \leq \ell - 1$}
\end{equation}
and 
\begin{equation}\label{eq: tildePhi property 2} 
    \pi_{\ell} \circ \phi_{\ell}(x_1, \dots, x_{\ell}) = \phi_{\ell}\big(\pi_{1}(x_1), \dots, \pi_{1}(x_{\ell})\big).
\end{equation}
Similarly, the map $\Pi_1 \circ \exp^{-1} \colon G \to V_1$ is a group homomorphism in the sense that
\begin{equation}\label{eq: pi homomorphism}
     \Pi_1 \circ \exp^{-1}(x \bdot y) = \Pi_1 \circ \exp^{-1}(x) +  \Pi_1 \circ \exp^{-1}(y) \qquad \textrm{for all $x$, $y \in G$,}
\end{equation}
where the right-hand sum is in terms of vector addition. 

\begin{proof}[Proof (of Lemma~\ref{lem: gen test})] One direction is clear and so we assume that $\Pi_1 \circ \exp^{-1} (S)$ generates $V_1$. We aim to show that $S$ generates $G$.
As in \eqref{eq: subgroup gen}, let $\langle S \rangle$ denote the subgroup of $G$ generated by $S$. 

We first claim that for any $x = \exp(X) \in G$ with $X \in V_1$, there exists some $\bg(x) \in G$ such that 
\begin{equation}\label{eq: g function}
    \bg(x) \in \langle  S \rangle \quad \textrm{and} \quad \pi_{1}(\bg(x)) = x. 
\end{equation}
Indeed, from our hypothesis on $S$ there exists a finite sequence of elements 
\begin{equation*}
   s_1, \dots, s_k \in S \quad \text{such that} \quad X = \Pi_1 \circ \exp^{-1}(s_1) + \dots + \Pi_1 \circ \exp^{-1}(s_k). 
\end{equation*}
If we define $\bg(x) := s_1 \bdot \dots \bdot s_k$, then clearly $\bg(x) \in \langle S \rangle$ whilst, by \eqref{eq: pi homomorphism}, we also have
\begin{equation*}
\Pi_1 \circ \exp^{-1}(\bg(x)) = \Pi_1 \circ \exp^{-1}(s_1) + \dots + \Pi_1 \circ \exp^{-1}(s_k) = X,
\end{equation*} 
which immediately implies \eqref{eq: g function}. We therefore obtain a function $\bg \colon \exp(V_1) \to G$ satisfying \eqref{eq: g function}. This function is not uniquely defined, but for our purposes it suffices to work with \textit{some} such $\bg$.\footnote{We could easily stipulate additional conditions to ensure $\bg$ is uniquely defined and thus avoid arbitrary choices in the definition.}

Assuming $G$ is an $m$-step group, we now use induction to prove that
\begin{equation}\label{eq: horiz sphere 2}
  G_{\ell} :=  \Big\{ \exp(Y) : Y \in \bigoplus_{i = \ell}^m V_i \Big\} \subseteq \langle S \rangle
\end{equation}
for all $1 \leq \ell \leq m+1$, where $G_{m+1}$ is interpreted as $\{0\}$. For $\ell = 1$, the above statement becomes $G = \langle  S \rangle$, which is precisely the content of the lemma. 

We take $\ell = m+1$ as the base of the induction, in which case \eqref{eq: horiz sphere 2} is trivial. Let $2 \leq \ell \leq m+1$ and suppose, by way of induction hypothesis, that $G_{\ell} \subseteq \langle S \rangle$. To complete the argument, it suffices to show $G_{\ell-1} \subseteq \langle  S \rangle$.

Fix $y \in G_{\ell -1 }$ so that 
\begin{equation*}
    y = \exp(\sum_{i=\ell-1}^m Y_i) \qquad \textrm{for some $Y_i \in V_i$, $\ell-1 \leq i \leq m$.}
\end{equation*} 
Since $G$ is stratified, we can find $X_1, \dots, X_{\ell-1} \in V_1$ such that
\begin{equation*}
\Phi_{\ell-1}(X_1, \dots, X_{\ell-1}) = Y_{\ell-1}.    
\end{equation*}
 Let $x_j := \exp (X_j)$ for $1 \leq j \leq \ell-1$. It follows from \eqref{eq: tildePhi property 2} and \eqref{eq: g function} that
\begin{align*}
   \pi_{{\ell-1}}\big(\phi_{\ell-1}(\bg(x_1), \dots, \bg(x_{\ell-1}))\big) &= \phi_{\ell-1}\big(\pi_{1}(\bg(x_1)), \dots, \pi_{1}(\bg(x_{\ell-1}))\big) \\
   &= \phi_{\ell-1}(x_1, \dots, x_{\ell - 1}) \\
   &= \exp(Y_{\ell-1}),
\end{align*}
where the last step is due to \eqref{eq: proj id} and \eqref{eq: Phi property 2}. On the other hand, from \eqref{eq: tildePhi property 1} we have
\begin{equation*}
   \pi_{i}\big(\phi_{\ell-1}(\bg(x_1), \dots, \bg(x_{\ell-1}))\big) = e \qquad \textrm{for $1 \leq i \leq \ell - 2$.}
\end{equation*}
Consequently, we may write
\begin{equation*}
    z := \phi_{\ell-1}(\bg(x_1), \dots, \bg(x_{\ell-1})) = \exp\Big( Y_{\ell-1} + \sum_{i = \ell}^m Z_{i}\Big) 
\end{equation*}
for some $Z_i \in V_i$ for $\ell \leq i \leq m$. In view of \eqref{eq: g function} and the definition of $z$ in terms of commutators of the $\bg(x_j)$, we have 
$z \in \langle S \rangle$. 

By the Baker--Campbell--Hausdorff formula, there exist polynomial mappings 
\begin{equation*}
    P_{z, i} \colon \bigoplus_{j = \ell}^m  V_j \mapsto V_i 
\end{equation*}
 such that if $u = \exp(U) := \exp( \sum_{i = \ell}^m U_i) \in G_{\ell}$ with $U_i = \Pi_i U \in V_i$, then 
\begin{equation*}
  \pi_i(u \bdot z) = \exp( U_i + P_{z, i}(U)) \qquad \textrm{for $\ell \leq i \leq m$}
\end{equation*}
where $P_{z, i}$ depends only $U_{\ell}, \dots, U_{i-1}$ and $z$. In particular, $P_{z, i}(U)$ is independent of $U_i, \dots, U_m$ and so the polynomial $P_{z, \ell}$ is constant as a function of $U$ (in fact, $P_{z, \ell}(U) = Z_{\ell}$). On the other hand, the remaining projections are given by
\begin{equation*}
    \pi_{i}(u \bdot z) = e \quad \textrm{for $1 \leq i \leq \ell-2$} \quad \textrm{and} \quad \pi_{{\ell-1}}(u \bdot z) = \exp(Y_{\ell-1}) 
\end{equation*}

For any $u \in G_{\ell}$ as above, the induction hypothesis implies that $u \bdot z \in \langle  S \rangle$. In view of the dependence properties of the $P_{z,i}$, it is possible to inductively choose the $U_i$ so that
\begin{equation*}
   Y_i = U_i +  P_{z,i}(U) \qquad \textrm{for $\ell \leq i \leq m$.}
\end{equation*}
Thus, from the preceding observations,  $y = u \bdot z \in \langle S \rangle$ and we conclude that $G_{\ell-1} \subseteq \langle  S \rangle$. This closes the induction and completes the proof. 
\end{proof}

%%%%%%%%%%%%%%%%%%%%%%%%%%%%%%%%%%%%%%%%%%%%%%%%%%%%%%%%%%%%%%%%%%%%%%%%%%%%%%%%%%%%%%%%%%%%%%%%

%    Existence of the Littlewood--Paley decomposition

%%%%%%%%%%%%%%%%%%%%%%%%%%%%%%%%%%%%%%%%%%%%%%%%%%%%%%%%%%%%%%%%%%%%%%%%%%%%%%%%%%%%%%%%%%%%%%%%

\appendix

\section{Existence of the Littlewood--Paley decomposition}\label{appendix: Littlewood--Paley}

Here we provide a proof of Proposition~\ref{prop: Littlewood--Paley}. For this, it is convenient to adopt slightly different notation from that used in the rest of the paper: given $f \in C(G)$ and a continuous parameter $t > 0$ we shall write $f_t := t^{-Q} f \circ \delta_{t^{-1}}$, so that the notation $f_k$ used previously in the paper corresponds to $f_{2^k}$. 

 Proposition~\ref{prop: Littlewood--Paley} follows from a basic result on $L^p$ approximate identities. Consider $\phi \in C_c^{\infty}(G)$ satisfying 
 \begin{equation}\label{eq: app 1}
    \int_G \phi = 1.  
 \end{equation}
 Given any $f \in C_c(G)$, it follows that 
\begin{equation}\label{eq: app 2a}
    \|f \ast \phi_t - f\|_{L^p(G)} \to 0 \quad \textrm{as $t \to 0_+$} \qquad \textrm{for all $1 \leq p \leq \infty$} 
\end{equation}
and
\begin{equation}\label{eq: app 2b}
    \|f \ast \phi_t\|_{L^p(G)} \to 0 \quad \textrm{as $t \to \infty$}  \qquad \textrm{for all $1 < p \leq \infty$;}
\end{equation}
the standard proofs are left to the reader (also see \cite[Proposition 1.20]{FS_book}). 

\begin{proof}[Proof (Proposition~\ref{prop: Littlewood--Paley})] 

Suppose $\phi \in C_c^{\infty}(G)$ satisfies \eqref{eq: app 1} as above. By \eqref{eq: app 2a} and \eqref{eq: app 2b}, we have
\begin{equation*}
    f = \lim_{K \to \infty} f \ast \phi_{2^{-K}} - f \ast \phi_{2^K}
\end{equation*}
and so, by the fundamental theorem of calculus,
\begin{equation}\label{eq: app 3}
    f = - \lim_{K \to \infty} \int_{2^{-K}}^{2^K} f \ast \Big( \frac{\partial \phi_t}{\partial t} \Big) \,\ud t = -\sum_{k \in \Z} f \ast \Big( \int_{2^k}^{2^{k+1}}  \frac{\partial \phi_t}{\partial t}   \,\ud t \Big),
\end{equation}
where in each case the convergence holds in $L^p(G)$ for $1 < p \leq \infty$. A computation shows
\begin{equation*}
    \frac{\partial \phi_t}{\partial t}(x) = -t^{-1} h_t(x) \qquad \textrm{for some $h \in C^{\infty}_c(G)$.}
\end{equation*}
Moreover, if we define
\begin{equation}\label{eq: app 4}
    \psi(x) := \int_1^2 h_t(x)\,\frac{\ud t}{t},
\end{equation}
then, by a simple change of variables, 
\begin{equation}\label{eq: app 5}
    -\int_{2^k}^{2^{k+1}}  \frac{\partial \phi_t}{\partial t}(x)  \,\ud t = \int_{2^k}^{2^{k+1}}  h_t(x)  \,\frac{\ud t}{t} = \psi_{2^k}(x).
\end{equation}
Combining \eqref{eq: app 3} and \eqref{eq: app 5}, we see that \eqref{eq: fourier proj 2} holds for $\psi$ as defined in \eqref{eq: app 4}. 

It remains to show $\psi$ is of mean zero. Clearly, it suffices to show the same property hold for the function $h$. However, since
\begin{equation*}
\int_G h(x) \,\ud x = - t \frac{\partial}{\partial t} \int_G \phi_t(x) \,\ud x \Big|_{t = 1},
\end{equation*}
the mean zero property for $h$ is an immediate consequence of \eqref{eq: app 1}. 
\end{proof}

%%%%%%%%%%%%%%%%%%%%%%%%%%%%%%%%%%%%%%%%%%%%%%%%%%%%%%%%%%%%%%%%%%%%%%%%%%%%%%%%%%%%%%%%%%%%%%%%

%                                          REFERENCES

%%%%%%%%%%%%%%%%%%%%%%%%%%%%%%%%%%%%%%%%%%%%%%%%%%%%%%%%%%%%%%%%%%%%%%%%%%%%%%%%%%%%%%%%%%%%%%%%

\bibliography{Reference}
\bibliographystyle{amsplain}

\end{document}